\documentclass{article}
\usepackage{tikz, pgfplots, wrapfig}
\usepackage{caption}
\usepackage{subcaption}
\usepackage{amssymb}
\usepackage{amsfonts}
\usepackage{amsmath}
\usepackage{fullpage}
\usepackage{graphicx}
\usepackage[utf8]{inputenc}
\usepackage{amsthm}
\usepackage{textcomp}

\usepackage{xfrac}

\setcounter{MaxMatrixCols}{10}

\newtheorem{theorem}{Theorem}

\newtheorem{definition}[theorem]{Definition}

\newtheorem{lemma}[theorem]{Lemma}

\newtheorem{proposition}[theorem]{Proposition}


\newcommand{\rP}{\mathrm{P}} 
\newcommand{\rE}{\mathrm{E}} 

\begin{document}

\title{
Rank-dependent Galton-Watson processes and their pathwise duals}
\author{Serik Sagitov\thanks{%
Chalmers University and University of Gothenburg, 412 96 Gothenburg, Sweden. Email address:
serik@chalmers.se. 
} \ and Jonas Jagers
}
\maketitle

\begin{abstract}
We introduce a modified Galton-Watson process using the framework of an infinite system of particles labeled by $(x,t)$, where $x$ is the rank of the particle born at time $t$. The key assumption concerning the offspring numbers of different particles is that they are independent, but their distributions may depend on the particle label $(x,t)$. For the associated system of coupled monotone Markov chains, we address the issue of pathwise duality elucidated by a remarkable graphical representation, with the trajectories of the primary Markov chains and their duals coalescing together to form forest graphs on a two-dimensional  grid. 
\end{abstract}

\section{Introduction}

The Galton-Watson (GW) process is a basic stochastic model for the generation size for a population of reproducing particles, see  \cite{AN}. Slightly modifying the framework of  \cite{Be}, we define a  GW process in terms of an infinite system of particles uniquely labeled by pairs $(x,t)\in\mathbb N\times\mathbb Z$, where $t$ refers to the generation number and $x$ is the {\it rank} of the particle within this generation. Given a set of independent and identically distributed random variables
\begin{equation}\label{ut} 
\big\{u_{t}(x)\big\}_{(x,t)\in\mathbb N\times\mathbb Z}
\end{equation}  
taking values in  $\mathbb N_0=\{0\}\cup\mathbb N$, a GW process stemming from $Z_a$ particles at time $a\in \mathbb Z$, is the Markov chain $\{Z_t\}_{t\ge a}$ characterized by the branching property
\begin{equation}\label{Yt} 
Z_{t+1}=\sum_{x=1}^{Z_t} u_t(x),
\end{equation}  
with $u_t(x)$ representing the offspring number of the particle $(t,x)$. 
Relation \eqref{Yt} induces the following rank-inheritance rules: 

(i) each particle $(x,t+1)$ has a unique parent $(x',t)$, 

(ii) if $x<y$, then $x'<y'$, where $(x',t)$ and  $(y',t)$ are the parents  of  $(x,t+1)$ and  $(y,t+1)$. 

\noindent For example, if $u_t(1)=k$ is positive, then $k$ children of the rank 1 particle get the ranks $1,\ldots,k$ among the particles born at time $t+1$. The ranks of particles play no role in the  standard GW  setting, however, they were used  in  \cite{Be} studying the GW processes with neutral mutations.

This paper introduces  a new modification of the GW  model by allowing the rank of a particle to determine its reproduction law.  
In a general rank-dependent GW setting, the independent offspring numbers $u_{t}(x)$ have  distributions
that vary over the birth times $t$ and particle ranks $x$. 
To illustrate, consider a linear-fractional reproduction law
\begin{align*}
\rE s^{u_t(x)}&=1-q_t(x)+q_t(x){p_ts\over 1-(1-p_t)s},  \quad p_t\in(0,1],  \quad q_t(x)=\left\{
\begin{array}{cl}
  1,&\text{ if }x=1, 3,\ldots,   \\
  0, &  \text{ if }x=2,4,\ldots,
\end{array}
\right.
\end{align*}
where the dependence on the particle rank takes effect via $q_t(x)$, the probability  of having non-zero offspring. Here, the particles with odd ranks always produce  $k\ge1$ offspring with probability $(1-p_t)^{k-1}p_t$, while the particles of even ranks have no offspring. 
Notice that the corresponding rank-dependent GW process can not be treated as a two-type GW process, since the number of even-ranked children for the rank 3 particle depends on the number children of the rank 1 particle.

 The standard GW process has many extensions, usually motivated by biological applications, see \cite{HJV, KA}. Some of these extensions can be viewed as examples of rank-dependent GW processes, see  Section \ref{BD}, where  the scope of the rank-dependent GW setting is highlighted by referring to bounded GW processes,  GW processes with immigration and emigration, duals to birth-death GW processes in varying environment, as well as GW processes embedded in continuous time linear birth-death processes  in varying environment.
 In particular, if the reproduction law  $\rE s^{u_t(x)}=f_{t}$ is not influenced by the particle rank, then the rank-dependent GW process is a  GW process in varying environment satisfying
\begin{equation*}
\rE (s^{Z_t}|Z_a=z)= (f_{a}\circ \ldots\circ   f_{t-1}(s))^z,
\end{equation*}
where $f\circ g(s)$ stands for  $f(g(s))$, see   \cite{Ja}, as well as  \cite{BS,BH,Ker} for recent treatments involving this model. In the rank-dependent GW setting, the last relation does hold in general,  making  analysis more complicated. 

The main results of the paper are collected in Section \ref{Sdu}. Our Theorem \ref{proM} considers the rank-dependent GW processes along with their pathwise dual processes,  whose definition in Section \ref{se} is based on Siegmund's duality, see \cite{JK,Si,StS}. It shows in particular, that the dual to the dual of a rank-dependent GW process is a shifted copy of the original rank-dependent GW process. In the literature on dual processes, the common setting involves time-homogeneous Markov processes. A notable exception is \cite{AS} treating a class of stationary processes. Our approach handles time-inhomogeneous Markov chains, and can even be adapted to the non-Markov setting, when for example, the offspring number $u_t(x)$  depends on the  offspring number $u_{t-1}(x')$ of the parent.

The infinite particle system framework allows for an illuminating graphical representation of a system of coupled rank-dependent GW processes and their pathwise duals visualizing their trajectories as forest graphs. 
 A process dual to an asexual reproduction model, like GW process or Wright-Fisher model, is usually interpreted in the terms of a coalescent model \cite{GH, Mo}. 
Somewhat counter-intuitively, our  graphical representation says that the dual to a branching process is again a form of the branching process with dependencies, see Figure \ref{Fig1}. 
The graphical representation works also for the primary reproduction models with fixed population size, like the Wright-Fisher model.

One of the examples  in Section \ref{BD} shows that, even with a standard GW process, the dual Markov chain is not necessarily a rank-dependent GW process, because the dual offspring numbers become dependent on each other. 
An interesting open problem is to characterize the class of rank-dependent GW processes, whose dual Markov chain is itself a  rank-dependent GW process. 
A simpler problem is  to characterize the class of GW processes, whose dual Markov chain is itself a  rank-dependent GW process. We obtain two results addressing the latter question. Consider the dual of the GW reproduction law.
Proposition \ref{JJ} says that the marginal dual offspring distribution is always linear-fractional.
Theorem \ref{th} demonstrates that the dual process is GW with an eternal particle if and only if that the primary reproduction law is itself linear-fractional.
Yet another example  in Section \ref{BD}  demonstrates that the dual to a GW process might be  a rank-dependent GW process which is not a GW with an eternal particle.

Section \ref{Spro} contains the proofs of the results stated in Section \ref{Sdu}.

\begin{figure*}[t]
    \centering
    \begin{subfigure}[t]{0.32\textwidth}
         \begin{tikzpicture}[scale=0.6]

\draw[step=1cm,dotted, thin] (-0,0) grid (7,7);

\node[black] () at (-0.5,0){$-3$};
\node[black] () at (-0.5,1){$-2$};
\node[black] () at (-0.5,2){$-1$};
\node[black] () at (-0.3,3){$0$};
\node[black] () at (-0.3,4){$1$};
\node[black] () at (-0.3,5){$2$};
\node[black] () at (-0.3,6){$3$};
\node[black] () at (-0.3,7){$4$};

\node[black] () at (0,-0.4){$0$};
\node[black] () at (1,-0.4){$1$};
\node[black] () at (2,-0.4){$2$};
\node[black] () at (3,-0.4){$3$};
\node[black] () at (4,-0.4){$4$};
\node[black] () at (5,-0.4){$5$};
\node[black] () at (6,-0.4){$6$};
\node[black] () at (7,-0.4){$7$};

\draw [line width=0.2mm,red] (0,0) -- (0,7);

\draw [line width=0.2mm,red] (1,7.03) -- (1,6.97);
\draw [line width=0.2mm,red] (2,7.03) -- (2,6.97);
\draw [line width=0.2mm,red] (3,7.03) -- (3,6.97);
\draw [line width=0.2mm,red] (6,7.03) -- (6,6.97);
\draw [line width=0.2mm,red] (7,7.03) -- (7,6.97);

\draw [line width=0.2mm,red]  (5,7)-- (6,6)-- (6,5);
\draw [line width=0.2mm,red]  (4,7)-- (5,6);
\draw [line width=0.2mm,red] (2,7) --(4,6)--(4,5);
\draw [line width=0.2mm,red]  (2,7)-- (3,6)-- (3,5);
\draw [line width=0.2mm,red]  (2,7)-- (2,6);
\draw [line width=0.2mm,red]  (0,7)-- (1,6);

\draw [line width=0.2mm,red]  (7,6)-- (7,5);
\draw [line width=0.2mm,red]  (6,6)-- (5,5);
\draw [line width=0.2mm,red]  (3,6)-- (2,5);
\draw [line width=0.2mm,red]  (3,6)-- (1,5)-- (3,4);

\draw [line width=0.2mm,red]  (6,5)-- (7,4)-- (7,3)-- (7,2);
\draw [line width=0.2mm,red]  (4,5)-- (6,4)-- (5,3);
\draw [line width=0.2mm,red]  (4,5)-- (5,4);
\draw [line width=0.2mm,red] (3,5)--(4,4)--(4,3);
\draw [line width=0.2mm,red]  (1,5)-- (2,4)-- (2,3);
\draw [line width=0.2mm,red]  (0,5)-- (1,4);

\draw [line width=0.2mm,red]  (7,4)-- (6,3)-- (6,2);
\draw [line width=0.2mm,red]  (4,4)-- (3,3)-- (3,2);
\draw [line width=0.2mm,red]  (0,4)--(1,3);

\draw [line width=0.2mm,red]  (5,3)-- (5,2);
\draw [line width=0.2mm,red]  (5,3)-- (4,2);
\draw [line width=0.2mm,red]  (3,3)-- (2,2)-- (3,1);
\draw [line width=0.2mm,red]  (2,3)-- (1,2);

\draw [line width=0.2mm,red]  (2,2)-- (4,1);
\draw [line width=0.2mm,red]  (4,2)-- (5,1)-- (4,0);
\draw [line width=0.2mm,red]  (4,2)-- (6,1);
\draw [line width=0.2mm,red]  (2,2)--(2,1);
\draw [line width=0.2mm,red]  (0,2)-- (1,1)--(1,0);

\draw [line width=0.2mm,red] (7,0.5) -- (6,0);
\draw [line width=0.2mm,red]  (7,1)-- (5,0);
\draw [line width=0.2mm,red]  (3,1)-- (3,0);
\draw [line width=0.2mm,red]  (3,1)-- (2,0);

\end{tikzpicture}
   
        \caption{}
    \end{subfigure}%
    ~ 
    \begin{subfigure}[t]{0.32\textwidth}
         \begin{tikzpicture}[scale=0.6]

\draw[step=1cm,dotted, thin] (0,0) grid (7,7);

\draw [line width=0.2mm,black] (3,-0.03) -- (3,0.03);
\draw [line width=0.2mm,black] (7,-0.03) -- (7,0.03);

\draw [line width=0.2mm,black] (0,0) -- (0,7);
\draw [line width=0.2mm,black] (1,0) -- (1,1);
\draw [line width=0.2mm,black] (2,0) -- (2,1) -- (1,2);
\draw [line width=0.2mm,black] (2,0) -- (3,1);
\draw [line width=0.2mm,black] (4,0) -- (4,1);
\draw [line width=0.2mm,black] (4,0) -- (5,1) -- (3,2);
\draw [line width=0.2mm,black] (5,0) -- (6,1);
\draw [line width=0.2mm,black] (5,0) -- (7,1) -- (5,2);
\draw [line width=0.2mm,black] (6,0) -- (7,0.5);

\draw [line width=0.2mm,black] (2,1) -- (2,2) -- (3,3);
\draw [line width=0.2mm,black] (5,1) -- (4,2) -- (4,3);
\draw [line width=0.2mm,black] (7,1) -- (6,2) -- (6,3);
\draw [line width=0.2mm,black] (7,1) -- (7,2) -- (7,3);

\draw [line width=0.2mm,black] (1,2) -- (1,3);
\draw [line width=0.2mm,black] (1,2) -- (2,3) -- (1,4);
\draw [line width=0.2mm,black] (4,2) -- (5,3) -- (5,4);

\draw [line width=0.2mm,black] (2,3) -- (2,4) -- (1,5);
\draw [line width=0.2mm,black] (3,3) -- (3,4);
\draw [line width=0.2mm,black] (3,3) -- (4,4) -- (2,5);
\draw [line width=0.2mm,black] (5,3) -- (6,4);
\draw [line width=0.2mm,black] (6,3) -- (7,4) -- (5,5);

\draw [line width=0.2mm,black] (4,4) -- (3,5);
\draw [line width=0.2mm,black] (5,4) -- (4,5) -- (4,6);
\draw [line width=0.2mm,black] (7,4) -- (6,5); 

\draw [line width=0.2mm,black] (1,5) -- (1,6); 
\draw [line width=0.2mm,black] (1,5) -- (2,6) -- (1,7); 
\draw [line width=0.2mm,black] (1,5) -- (3,6);
\draw [line width=0.2mm,black] (5,5) -- (5,6) -- (3,7); 
\draw [line width=0.2mm,black] (5,5) -- (6,6) -- (5,7);
\draw [line width=0.2mm,black] (7,5) -- (7,6) -- (6,7);

\draw [line width=0.2mm,black] (2,6) -- (2,7);
\draw [line width=0.2mm,black] (5,6) -- (4,7);
\draw [line width=0.2mm,black] (7,6) -- (7,7);

\node[black] () at (-0.5,0){$-3$};
\node[black] () at (-0.5,1){$-2$};
\node[black] () at (-0.5,2){$-1$};
\node[black] () at (-0.3,3){$0$};
\node[black] () at (-0.3,4){$1$};
\node[black] () at (-0.3,5){$2$};
\node[black] () at (-0.3,6){$3$};
\node[black] () at (-0.3,7){$4$};

\node[black] () at (0,-0.4){$0$};
\node[black] () at (1,-0.4){$1$};
\node[black] () at (2,-0.4){$2$};
\node[black] () at (3,-0.4){$3$};
\node[black] () at (4,-0.4){$4$};
\node[black] () at (5,-0.4){$5$};
\node[black] () at (6,-0.4){$6$};
\node[black] () at (7,-0.4){$7$};

\end{tikzpicture}
       \caption{}
    \end{subfigure}
    \begin{subfigure}[t]{0.32\textwidth}
         \begin{tikzpicture}[scale=0.6]

\draw[step=1cm,dotted, thin] (0,0) grid (7,7);

\draw [line width=0.2mm,black] (3,-0.03) -- (3,0.03);
\draw [line width=0.2mm,black] (7,-0.03) -- (7,0.03);

\draw [line width=0.2mm,black] (0,0) -- (0,7);
\draw [line width=0.2mm,black] (1,0) -- (1,1);
\draw [line width=0.2mm,black] (2,0) -- (2,1) -- (1,2);
\draw [line width=0.2mm,black] (2,0) -- (3,1);
\draw [line width=0.2mm,black] (4,0) -- (4,1);
\draw [line width=0.2mm,black] (4,0) -- (5,1) -- (3,2);
\draw [line width=0.2mm,black] (5,0) -- (6,1);
\draw [line width=0.2mm,black] (5,0) -- (7,1) -- (5,2);
\draw [line width=0.2mm,black] (6,0) -- (7,0.5);

\draw [line width=0.2mm,black] (2,1) -- (2,2) -- (3,3);
\draw [line width=0.2mm,black] (5,1) -- (4,2) -- (4,3);
\draw [line width=0.2mm,black] (7,1) -- (6,2) -- (6,3);
\draw [line width=0.2mm,black] (7,1) -- (7,2) -- (7,3);

\draw [line width=0.2mm,black] (1,2) -- (1,3);
\draw [line width=0.2mm,black] (1,2) -- (2,3) -- (1,4);
\draw [line width=0.2mm,black] (4,2) -- (5,3) -- (5,4);

\draw [line width=0.2mm,black] (2,3) -- (2,4) -- (1,5);
\draw [line width=0.2mm,black] (3,3) -- (3,4);
\draw [line width=0.2mm,black] (3,3) -- (4,4) -- (2,5);
\draw [line width=0.2mm,black] (5,3) -- (6,4);
\draw [line width=0.2mm,black] (6,3) -- (7,4) -- (5,5);

\draw [line width=0.2mm,black] (4,4) -- (3,5);
\draw [line width=0.2mm,black] (5,4) -- (4,5) -- (4,6);
\draw [line width=0.2mm,black] (7,4) -- (6,5); 

\draw [line width=0.2mm,black] (1,5) -- (1,6); 
\draw [line width=0.2mm,black] (1,5) -- (2,6) -- (1,7); 
\draw [line width=0.2mm,black] (1,5) -- (3,6);
\draw [line width=0.2mm,black] (5,5) -- (5,6) -- (3,7); 
\draw [line width=0.2mm,black] (5,5) -- (6,6) -- (5,7);
\draw [line width=0.2mm,black] (7,5) -- (7,6) -- (6,7);

\draw [line width=0.2mm,black] (2,6) -- (2,7);
\draw [line width=0.2mm,black] (5,6) -- (4,7);
\draw [line width=0.2mm,black] (7,6) -- (7,7);

\node[black] () at (-0.5,0){$-3$};
\node[black] () at (-0.5,1){$-2$};
\node[black] () at (-0.5,2){$-1$};
\node[black] () at (-0.3,3){$0$};
\node[black] () at (-0.3,4){$1$};
\node[black] () at (-0.3,5){$2$};
\node[black] () at (-0.3,6){$3$};
\node[black] () at (-0.3,7){$4$};

\node[black] () at (0,-0.4){$0$};
\node[black] () at (1,-0.4){$1$};
\node[black] () at (2,-0.4){$2$};
\node[black] () at (3,-0.4){$3$};
\node[black] () at (4,-0.4){$4$};
\node[black] () at (5,-0.4){$5$};
\node[black] () at (6,-0.4){$6$};
\node[black] () at (7,-0.4){$7$};

\draw [line width=0.2mm,red] (0.5,0) -- (0.5,7);

\draw [line width=0.2mm,red] (1.5,7.03) -- (1.5,6.97);
\draw [line width=0.2mm,red] (3.5,7.03) -- (3.5,6.97);
\draw [line width=0.2mm,red] (6.5,7.03) -- (6.5,6.97);

\draw [line width=0.2mm,red]  (5.5,7)-- (6.5,6)-- (6.5,5);
\draw [line width=0.2mm,red]  (4.5,7)-- (5.5,6);
\draw [line width=0.2mm,red] (2.5,7) --(4.5,6)--(4.5,5);
\draw [line width=0.2mm,red]  (2.5,7)-- (3.5,6)-- (3.5,5);
\draw [line width=0.2mm,red]  (2.5,7)-- (2.5,6);
\draw [line width=0.2mm,red]  (0.5,7)-- (1.5,6);

\draw [line width=0.2mm,red]  (6.5,6)-- (5.5,5);
\draw [line width=0.2mm,red]  (3.5,6)-- (2.5,5);
\draw [line width=0.2mm,red]  (3.5,6)-- (1.5,5)-- (3.5,4);

\draw [line width=0.2mm,red]  (6.5,5)-- (7,4.5);
\draw [line width=0.2mm,red]  (4.5,5)-- (6.5,4)-- (5.5,3);
\draw [line width=0.2mm,red]  (4.5,5)-- (5.5,4);
\draw [line width=0.2mm,red] (3.5,5)--(4.5,4)--(4.5,3);
\draw [line width=0.2mm,red]  (1.5,5)-- (2.5,4)-- (2.5,3);
\draw [line width=0.2mm,red]  (0.5,5)-- (1.5,4);

\draw [line width=0.2mm,red]  (7,3.5)-- (6.5,3)-- (6.5,2);
\draw [line width=0.2mm,red]  (4.5,4)-- (3.5,3)-- (3.5,2);
\draw [line width=0.2mm,red]  (0.5,4)--(1.5,3);

\draw [line width=0.2mm,red]  (5.5,3)-- (5.5,2);
\draw [line width=0.2mm,red]  (5.5,3)-- (4.5,2);
\draw [line width=0.2mm,red]  (3.5,3)-- (2.5,2)-- (3.5,1);
\draw [line width=0.2mm,red]  (2.5,3)-- (1.5,2);

\draw [line width=0.2mm,red]  (2.5,2)-- (4.5,1);
\draw [line width=0.2mm,red]  (4.5,2)-- (5.5,1)-- (4.5,0);
\draw [line width=0.2mm,red]  (4.5,2)-- (6.5,1);
\draw [line width=0.2mm,red]  (2.5,2)--(2.5,1);
\draw [line width=0.2mm,red]  (0.5,2)-- (1.5,1)--(1.5,0);

\draw [line width=0.2mm,red] (7,0.25) -- (6.5,0);
\draw [line width=0.2mm,red]  (7,0.75)-- (5.5,0);
\draw [line width=0.2mm,red]  (3.5,1)-- (3.5,0);
\draw [line width=0.2mm,red]  (3.5,1)-- (2.5,0);

\end{tikzpicture}
       \caption{}
    \end{subfigure}
    \caption{Graphical representation of the iterated reproduction mappings. }
\label{Fig1}\end{figure*}
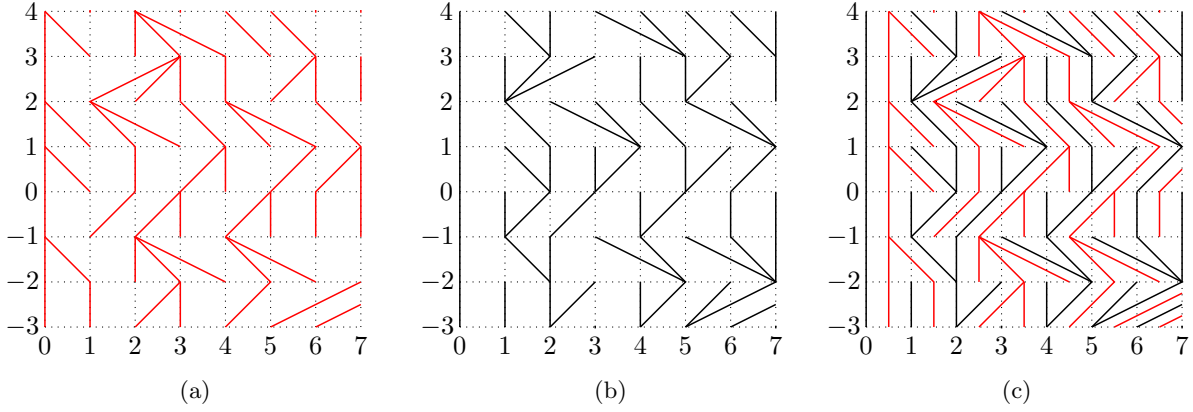

\section{Coupled rank-dependent GW processes and their duals}\label{se}
Let  $ \Phi_0$  be  the class of monotone functions $U:\mathbb N_0\to \mathbb N_0$ such that $U(0)=0$. If  $U\in \Phi_0$ and  $u(x)=U(x)-U(x-1)$, then $U$ will be called a {\it reproduction mapping} with the offspring numbers $u(x)$, $x\in\mathbb N$.
Given a set of independent  random variables \eqref{ut}, define a sequence of random reproduction mappings $U_t(x)=\sum_{y=1}^x u_t(y)$, and consider the family of stochastic iterations 
\begin{equation}\label{zt}
U_{a,b}=U_{b-1}\circ U_{b-2}\circ\cdots\circ U_{a}, \quad a<b,\quad U_{a,a}(x)\equiv x.
\end{equation}
Putting  $Z_t=U_{a,t}(z)$, we obtain a time-inhomogeneous Markov chain  $\{Z_t\}_{t\ge a}$ satisfying \eqref{Yt}, which will be called a rank-dependent GW process. Moreover, using the system of stochastic iterations 
\begin{equation}\label{bU}
\boldsymbol U=\{U_{a,b}\}_{\infty<a\le b<\infty},
\end{equation}
we can define  coupled Markov chains $\{U_{a,t}(x)\}_{t\ge a}$  starting at $U_{a,a}(x)=x$ for all possible $a\in\mathbb Z$ and $x\in\mathbb N$. We call \eqref{bU} a rank-dependent GW system with the reproduction law
$$f_{t,x}(s)=\rE s^{u_{t}(x)},\quad  s\in[0,1],\quad  x\in\mathbb N,\quad  t\in\mathbb Z.$$

\begin{definition}\label{deg}
If $U\in \Phi_0$ and $V=U^-$, where
\[ U^-(x)=\min\{y:U(y)\ge x\},
 \]
then $V\in \Phi_0$ will be called the  {\it pathwise dual} of the reproduction mapping $U$.
\end{definition}
As shown in Section \ref{Spro}, Definition \ref{deg} is equivalent to the equality
\begin{align}
 \{(x,y)\in\mathbb N_0^2:V(x)\le y\}= \{(x,y)\in\mathbb N_0^2:U(y)\ge x\},\label{VxU}
\end{align}
and therefore can be referred to as the pathwise Siegmund duality, see \cite{JK,Si,StS}. 
\begin{definition}\label{dog}
Given a rank-dependent GW system \eqref{bU}, define its  time-reverse by
\begin{equation*}
\boldsymbol V=\{V_{b,a}\}_{\infty<a\le b<\infty},\quad V_{b,a}=V_{a}\circ\cdots\circ V_{b-1}, \quad  a\le b,
\end{equation*}
where $V_t=U_t^-$ are the dual reproduction mappings. Putting $\hat U_t=V_{-t-1}$, define the pathwise dual of $\boldsymbol U$ by
\begin{equation*}
\hat{\boldsymbol U}=\{\hat U_{a,b}\}_{\infty<a\le b<\infty},\quad \hat U_{a,b}=\hat U_{b-1}\circ\cdots\circ \hat U_{a}, \quad  a\le b,
\end{equation*}
\end{definition}
The trajectories of a rank-dependent GW system and its time-reverse can be represented by forest graphs on the grid of nodes $\mathbb N_0\times \mathbb Z$. As seen on the Figure \ref{Fig1}a, the bottom-up lineages $\{(U_{a,t}(x),t), t\ge a\}
$  starting from different levels $a\in\mathbb Z$ and different positions  $x\in\mathbb N_0$, merge together into coalescent trees. The resulting graph will be called a {\it dual forest}. On the other hand, as shown on the Figure \ref{Fig1}b, the top-down  lineages $\{(V_{b,t}(x),t), t\le b\}$
starting from different levels $b\in\mathbb Z$ and different positions  $x\in\mathbb N_0$, build up a graph that we call a {\it primary forest}. 
Figure \ref{Fig1}c demonstrates that the two forests can be  conveniently depicted together after the dual forest is shifted to the right by $\sfrac{1}{2}$.
Drawn in this way, the lineages of the primary and dual forests do not cross. The primary forest describes the genealogical  trees of the primary rank-dependent GW system. A lineage in the dual tree followed up from the vertex $(z,t)$, delineates a trajectory of the Markov chain \eqref{Yt}. Figure \ref{Fig2} illustrates how the trajectories of $\hat{\boldsymbol U}$ are obtained from those of $\boldsymbol V$ by a vertical flipping.

\section{Main results}\label{Sdu}
Let reproduction mappings $U,V,\tilde U\in\Phi_0$, 
be connected as  $V=U^-$,  $\tilde U=V^-$. The same notation will be also used with the time index $t$.

\begin{theorem}\label{proM}
Given  \eqref{bU}, put 
$$V_t=U_t^-,\quad \hat U_t=V_{-t-1},\quad \hat V_t=\hat U_t^-,\quad \tilde  U_t=\hat V_{-t-1},$$
and alongside the rank-dependent GW system $\boldsymbol U$, consider its time-reverse  $\boldsymbol V$ and pathwise dual $\hat {\boldsymbol U}$, see Definition \ref{dog}.
Whenever  $a\le b$ and $x,y\in\mathbb N_0$,  the two events coincide
 \[\{\hat U_{-b,-a}(x)\le y\}=\{x\le U_{a,b}(y)\}.\]
The reproduction mappings $\tilde  U_t$ define a rank-dependent GW system  $\tilde {\boldsymbol U}$ in a similar way as \eqref{bU} defines the primary rank-dependent GW system  $\boldsymbol U$. The $\tilde {\boldsymbol U}$ is the dual of the dual $\hat {\boldsymbol U}$, and is obtained as a simple shifting transform of  ${\boldsymbol U}$:
\[\tilde U_{a,b}(x)= U_{a,b}(x-1)+1,\quad x\ge1,\quad a\le b.\]
\end{theorem}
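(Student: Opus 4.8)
The plan is to reduce the composed identity $\tilde U_{a,b}(x)=U_{a,b}(x-1)+1$ to a single-step identity for double duals and then propagate it through the iteration \eqref{zt}. First I would untangle the index bookkeeping in Definition~\ref{dog}: from $\tilde U_t=\hat V_{-t-1}$ and $\hat V_s=\hat U_s^-$ we get $\tilde U_t=\hat U_{-t-1}^-$, and since $\hat U_s=V_{-s-1}$ gives $\hat U_{-t-1}=V_{-(-t-1)-1}=V_t$, we conclude $\tilde U_t=V_t^-=(U_t^-)^-$. Thus each $\tilde U_t$ is exactly the double dual of $U_t$, in agreement with the notation $V=U^-$, $\tilde U=V^-$ fixed before the theorem, and the claim becomes the statement that taking the double dual of every factor and then composing reproduces the original composition conjugated by a unit upward shift.

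Second I would establish the single-step identity $\tilde U(x)=U(x-1)+1$ for $x\ge1$ by applying the Siegmund relation \eqref{VxU} twice. Reading \eqref{VxU} for the pair $(V,\tilde U)$ gives $\tilde U(x)\le y\iff V(y)\ge x$, while negating \eqref{VxU} for the pair $(U,V)$ gives $V(y)\ge x\iff V(y)\not\le x-1\iff U(x-1)\not\ge y\iff U(x-1)+1\le y$. Chaining these equivalences yields $\tilde U(x)\le y\iff U(x-1)+1\le y$ for every $y\in\mathbb N_0$, whence $\tilde U(x)=U(x-1)+1$ for $x\ge1$, while $\tilde U(0)=0$ because $\tilde U\in\Phi_0$. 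Routing the argument through \eqref{VxU} is preferable to manipulating $\min\{y:\cdots\}$ directly, since it sidesteps the empty-set convention that would otherwise arise when $U$ is bounded.

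Third I would propagate this to the composition by induction on $b-a$, using $\tilde U_{a,b}=\tilde U_{b-1}\circ\cdots\circ\tilde U_{a}$. The base case $b=a$ is immediate, both sides equalling $x$. For the inductive step, assume $\tilde U_{a,b}(x)=U_{a,b}(x-1)+1$; this value is $\ge1$, so the single-step identity applies to $\tilde U_b$ at the argument $U_{a,b}(x-1)+1$, giving $\tilde U_{a,b+1}(x)=\tilde U_b\big(U_{a,b}(x-1)+1\big)=U_b\big(U_{a,b}(x-1)\big)+1=U_{a,b+1}(x-1)+1$. Equivalently, one may phrase the whole step as conjugation by the shift $S(x)=x+1$: the single-step identity reads $\tilde U_t=S\circ U_t\circ S^{-1}$ on $\{x\ge1\}$, the inner factors $S^{-1}\circ S$ telescope, and one is left with $\tilde U_{a,b}=S\circ U_{a,b}\circ S^{-1}$. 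The companion assertions of the theorem then come for free: the event coincidence $\{\hat U_{-b,-a}(x)\le y\}=\{x\le U_{a,b}(y)\}$ is \eqref{VxU} iterated along the chain, and ``dual of the dual'' is that coincidence applied to $\hat{\boldsymbol U}$ in place of $\boldsymbol U$.

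The two applications of \eqref{VxU} and the induction are routine; the main obstacle is the offset bookkeeping. One must keep the upward shift aligned with its partial inverse so that every intermediate argument stays $\ge1$, the only range on which $S^{-1}$ and equivalently the formula $U(\cdot-1)+1$ are valid, and one must separately verify that the nested time-reversals $\hat U_t=V_{-t-1}$ and $\tilde U_t=\hat V_{-t-1}$ collapse to the plain double dual $\tilde U_t=(U_t^-)^-$. Once these offsets are tracked correctly, the shifting transform, together with the fact that the $\tilde U_t\in\Phi_0$ compose as in \eqref{zt} to a genuine rank-dependent GW system, follows without further work.
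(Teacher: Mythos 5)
Your proof is correct, and its overall architecture matches the paper's: the event coincidence is obtained by peeling off one factor at a time with the Siegmund relation \eqref{VxU}, and the shift formula is obtained from a single-step identity $\tilde U_t(x)=U_t(x-1)+1$ that is then telescoped through the composition. Where you genuinely differ is in how the single-step identity is established. The paper derives it from Lemma \ref{L}, i.e.\ from the explicit combinatorial representation \eqref{hata1}--\eqref{hatb} of the offspring sequences as blocks of zeros separated by positive entries, which shows $\tilde u(1)=1$ and $\tilde u(x)=u(x-1)$ for $x\ge2$; that lemma is also the engine behind Proposition \ref{JJ}, so the paper gets extra mileage out of it. You instead apply \eqref{VxU} twice — once for the pair $(V,\tilde U)$ and once, negated, for the pair $(U,V)$ — and chain the equivalences to get $\tilde U(x)\le y\iff U(x-1)+1\le y$. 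This order-theoretic (Galois-connection style) derivation is cleaner and avoids the combinatorial bookkeeping entirely; its only cost is that it leans on \eqref{VxU}, which the paper itself only verifies inside the proof of Lemma \ref{L} (though it is an immediate consequence of Definition \ref{deg} and monotonicity, so this is not a gap). Your explicit remarks that the intermediate arguments stay $\ge1$ in the induction, and that the nested time-reversals collapse to $\tilde U_t=(U_t^-)^-$, make precise two points the paper passes over silently.
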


Figure \ref{Fig2} is an illustration of the "picture proof" of Theorem \ref{proM} using the graphical representation.  Figure \ref{Fig2}b shows an intermediate step in the transformation of the primary forest on Figure \ref{Fig2}a into  the primary forest on Figure \ref{Fig2}c representing the twofold dual rank-dependent GW system $\tilde{\boldsymbol U}$. Since $\hat U_{a,b}=V_{-b,-a}$, we find that the primary forest on  Figure \ref{Fig2}b representing the genealogical trees of the dual rank-dependent GW system $\hat{\boldsymbol U}$, is the dual forest from Figure \ref{Fig2}a flipped around the axis $t=0$ and shifted to the right by 1 (visually it is shifted just by $\sfrac{1}{2}$). Observe that the rank 1  particle in the dual reproduction flow is necessarily "eternal", giving birth at least to one offspring.  Thus the black forest from (a) flips into the red forest in (b), then the black forest in (b) generates the red forest in (b), which in turn gives the black forest in (c).
 We see that the primary forest on
Figure \ref{Fig2}c is a shifted copy of the primary forest from Figure \ref{Fig2}a, as predicted by Theorem \ref{proM}.

\begin{figure*}[t]
    \centering
    \begin{subfigure}[t]{0.32\textwidth}
         \begin{tikzpicture}[scale=0.6]

\draw[step=1cm,dotted, thin] (0,0) grid (7,7);
\draw [line width=0.2mm,black] (0,0) -- (0,7);

\draw [line width=0.2mm,black] (3,-0.03) -- (3,0.03);
\draw [line width=0.2mm,black] (7,-0.03) -- (7,0.03);

\draw [line width=0.2mm,black] (1,0) -- (1,1);
\draw [line width=0.2mm,black] (2,0) -- (2,1) -- (1,2);
\draw [line width=0.2mm,black] (2,0) -- (3,1);
\draw [line width=0.2mm,black] (4,0) -- (4,1);
\draw [line width=0.2mm,black] (4,0) -- (5,1) -- (3,2);
\draw [line width=0.2mm,black] (5,0) -- (6,1);
\draw [line width=0.2mm,black] (5,0) -- (7,1) -- (5,2);
\draw [line width=0.2mm,black] (6,0) -- (7,0.5);

\draw [line width=0.2mm,black] (2,1) -- (2,2) -- (3,3);
\draw [line width=0.2mm,black] (5,1) -- (4,2) -- (4,3);
\draw [line width=0.2mm,black] (7,1) -- (6,2) -- (6,3);
\draw [line width=0.2mm,black] (7,1) -- (7,2) -- (7,3);

\draw [line width=0.2mm,black] (1,2) -- (1,3);
\draw [line width=0.2mm,black] (1,2) -- (2,3) -- (1,4);
\draw [line width=0.2mm,black] (4,2) -- (5,3) -- (5,4);

\draw [line width=0.2mm,black] (2,3) -- (2,4) -- (1,5);
\draw [line width=0.2mm,black] (3,3) -- (3,4);
\draw [line width=0.2mm,black] (3,3) -- (4,4) -- (2,5);
\draw [line width=0.2mm,black] (5,3) -- (6,4);
\draw [line width=0.2mm,black] (6,3) -- (7,4) -- (5,5);

\draw [line width=0.2mm,black] (4,4) -- (3,5);
\draw [line width=0.2mm,black] (5,4) -- (4,5) -- (4,6);
\draw [line width=0.2mm,black] (7,4) -- (6,5); 

\draw [line width=0.2mm,black] (1,5) -- (1,6); 
\draw [line width=0.2mm,black] (1,5) -- (2,6) -- (1,7); 
\draw [line width=0.2mm,black] (1,5) -- (3,6);
\draw [line width=0.2mm,black] (5,5) -- (5,6) -- (3,7); 
\draw [line width=0.2mm,black] (5,5) -- (6,6) -- (5,7);
\draw [line width=0.2mm,black] (7,5) -- (7,6) -- (6,7);

\draw [line width=0.2mm,black] (2,6) -- (2,7);
\draw [line width=0.2mm,black] (5,6) -- (4,7);
\draw [line width=0.2mm,black] (7,6) -- (7,7);

\node[black] () at (-0.5,0){$-3$};
\node[black] () at (-0.5,1){$-2$};
\node[black] () at (-0.5,2){$-1$};
\node[black] () at (-0.3,3){$0$};
\node[black] () at (-0.3,4){$1$};
\node[black] () at (-0.3,5){$2$};
\node[black] () at (-0.3,6){$3$};
\node[black] () at (-0.3,7){$4$};

\node[black] () at (1,-0.4){$1$};
\node[black] () at (2,-0.4){$2$};
\node[black] () at (3,-0.4){$3$};
\node[black] () at (4,-0.4){$4$};
\node[black] () at (5,-0.4){$5$};
\node[black] () at (6,-0.4){$6$};
\node[black] () at (7,-0.4){$7$};

\draw [line width=0.2mm,red] (0.5,0) -- (0.5,7);

\draw [line width=0.2mm,red] (1.5,7.03) -- (1.5,6.97);
\draw [line width=0.2mm,red] (2.5,7.03) -- (2.5,6.97);
\draw [line width=0.2mm,red] (3.5,7.03) -- (3.5,6.97);
\draw [line width=0.2mm,red] (6.5,7.03) -- (6.5,6.97);

\draw [line width=0.2mm,red]  (5.5,7)-- (6.5,6)-- (6.5,5);
\draw [line width=0.2mm,red]  (4.5,7)-- (5.5,6);
\draw [line width=0.2mm,red] (2.5,7) --(4.5,6)--(4.5,5);
\draw [line width=0.2mm,red]  (2.5,7)-- (3.5,6)-- (3.5,5);
\draw [line width=0.2mm,red]  (2.5,7)-- (2.5,6);
\draw [line width=0.2mm,red]  (0.5,7)-- (1.5,6);

\draw [line width=0.2mm,red]  (6.5,6)-- (5.5,5);
\draw [line width=0.2mm,red]  (3.5,6)-- (2.5,5);
\draw [line width=0.2mm,red]  (3.5,6)-- (1.5,5)-- (3.5,4);

\draw [line width=0.2mm,red]  (6.5,5)-- (7,4.5);
\draw [line width=0.2mm,red]  (4.5,5)-- (6.5,4)-- (5.5,3);
\draw [line width=0.2mm,red]  (4.5,5)-- (5.5,4);
\draw [line width=0.2mm,red] (3.5,5)--(4.5,4)--(4.5,3);
\draw [line width=0.2mm,red]  (1.5,5)-- (2.5,4)-- (2.5,3);
\draw [line width=0.2mm,red]  (0.5,5)-- (1.5,4);

\draw [line width=0.2mm,red]  (7,3.5)-- (6.5,3)-- (6.5,2);
\draw [line width=0.2mm,red]  (4.5,4)-- (3.5,3)-- (3.5,2);
\draw [line width=0.2mm,red]  (0.5,4)--(1.5,3);

\draw [line width=0.2mm,red]  (5.5,3)-- (5.5,2);
\draw [line width=0.2mm,red]  (5.5,3)-- (4.5,2);
\draw [line width=0.2mm,red]  (3.5,3)-- (2.5,2)-- (3.5,1);
\draw [line width=0.2mm,red]  (2.5,3)-- (1.5,2);

\draw [line width=0.2mm,red]  (2.5,2)-- (4.5,1);
\draw [line width=0.2mm,red]  (4.5,2)-- (5.5,1)-- (4.5,0);
\draw [line width=0.2mm,red]  (4.5,2)-- (6.5,1);
\draw [line width=0.2mm,red]  (2.5,2)--(2.5,1);
\draw [line width=0.2mm,red]  (0.5,2)-- (1.5,1)--(1.5,0);

\draw [line width=0.2mm,red] (7,0.25) -- (6.5,0);
\draw [line width=0.2mm,red]  (7,0.75)-- (5.5,0);
\draw [line width=0.2mm,red]  (3.5,1)-- (3.5,0);
\draw [line width=0.2mm,red]  (3.5,1)-- (2.5,0);

\end{tikzpicture}
   
        \caption{}
    \end{subfigure}%
    ~ 
    \begin{subfigure}[t]{0.32\textwidth}
         \begin{tikzpicture}[scale=0.6]

\draw[step=1cm,dotted, thin] (0,0) grid (7,7);
\draw [line width=0.2mm,black] (0,0) -- (0,7);

\draw [line width=0.2mm,black] (2,-0.03) -- (2,0.03);
\draw [line width=0.2mm,black] (7,-0.03) -- (7,0.03);

\draw [line width=0.2mm,black] (1,0) -- (1,7);
\draw [line width=0.2mm,black] (1,0) -- (2,1);
\draw [line width=0.2mm,black] (3,0) -- (3,1);
\draw [line width=0.2mm,black] (3,0) -- (4,1)--(2,2)--(3,3)--(3,4)--(2,5);
\draw [line width=0.2mm,black] (3,0) -- (5,1) -- (5,2)-- (6,3);
\draw [line width=0.2mm,black] (5,0) -- (6,1);
\draw [line width=0.2mm,black] (6,0) -- (7,1)--(6,2);
\draw [line width=0.2mm,black] (4,1) -- (3,2);
\draw [line width=0.2mm,black] (4,1) -- (4,2) -- (5,3)--(4,4)--(3,5)--(3,6);
\draw [line width=0.2mm,black] (7,1) -- (7,2);
\draw [line width=0.2mm,black] (1,2) -- (2,3);
\draw [line width=0.2mm,black] (2,2) -- (4,3);
\draw [line width=0.2mm,black] (5,2) -- (7,3) -- (6,4) -- (5,5) -- (6,6) -- (5,7);
\draw [line width=0.2mm,black] (1,3) -- (2,4);
\draw [line width=0.2mm,black] (5,3) -- (5,4);
%
\draw [line width=0.2mm,black] (4,4) -- (4,5);
\draw [line width=0.2mm,black] (6,4) -- (6,5);
\draw [line width=0.2mm,black] (7,4) -- (7,5);
%
\draw [line width=0.2mm,black] (1,5) -- (2,6)-- (2,7); 
\draw [line width=0.2mm,black] (3,5) -- (4,6) -- (3,7);
\draw [line width=0.2mm,black] (3,5) -- (5,6); 
\draw [line width=0.2mm,black] (5,5) -- (7,6);
%
\draw [line width=0.2mm,black] (4,6) -- (4,7);
\draw [line width=0.2mm,black] (7,6.5) -- (6,7);

\node[black] () at (-0.5,0){$-4$};
\node[black] () at (-0.5,1){$-3$};
\node[black] () at (-0.5,2){$-2$};
\node[black] () at (-0.5,3){$-1$};
\node[black] () at (-0.3,4){$0$};
\node[black] () at (-0.3,5){$1$};
\node[black] () at (-0.3,6){$2$};
\node[black] () at (-0.3,7){$3$};

\node[black] () at (1,-0.4){$1$};
\node[black] () at (2,-0.4){$2$};
\node[black] () at (3,-0.4){$3$};
\node[black] () at (4,-0.4){$4$};
\node[black] () at (5,-0.4){$5$};
\node[black] () at (6,-0.4){$6$};
\node[black] () at (7,-0.4){$7$};

%
\draw [line width=0.2mm,red] (0.5,0) -- (0.5,7);
%
\draw [line width=0.2mm,red] (3.5,7.03) -- (3.5,6.97);
%
\draw [line width=0.2mm,red]  (4.5,7)-- (5.5,6)-- (3.5,5);
\draw [line width=0.2mm,red]  (4.5,7)-- (4.5,6);
\draw [line width=0.2mm,red] (5.5,7) --(6.5,6);
\draw [line width=0.2mm,red] (5.5,7) --(7,6.25);
\draw [line width=0.2mm,red]  (2.5,7)-- (2.5,6)-- (1.5,5)-- (1.5,4);
\draw [line width=0.2mm,red]  (2.5,7)-- (3.5,6);
\draw [line width=0.2mm,red] (1.5,7) -- (1.5,6);
\draw [line width=0.2mm,red]  (6.5,7)-- (7,6.75);
\draw [line width=0.2mm,red]  (2.5,6)-- (2.5,5)-- (3.5,4)-- (3.5,3);
\draw [line width=0.2mm,red]  (7,5.5)-- (6.5,5);
\draw [line width=0.2mm,red]  (7,5.75)-- (5.5,5);
\draw [line width=0.2mm,red]  (5.5,6)-- (4.5,5)-- (4.5,4);
\draw [line width=0.2mm,red]  (1.5,5)--(2.5,4)--(1.5,3);
\draw [line width=0.2mm,red]  (4.5,5)-- (5.5,4)-- (5.5,3)-- (4.5,2)-- (4.5,1);
\draw [line width=0.2mm,red] (6.5,5)--(6.5,4)--(7,3.5);
%
%
\draw [line width=0.2mm,red]  (4.5,3)-- (3.5,2);
\draw [line width=0.2mm,red]  (7,2.75)-- (5.5,2);
\draw [line width=0.2mm,red]  (7,2.5)-- (6.5,2);
\draw [line width=0.2mm,red]  (2.5,4)-- (2.5,3)-- (1.5,2)-- (1.5,1);
\draw [line width=0.2mm,red]  (3.5,4)-- (4.5,3)-- (2.5,2);
\draw [line width=0.2mm,red]  (5.5,4)-- (6.5,3);
\draw [line width=0.2mm,red]  (1.5,2)-- (3.5,1);
\draw [line width=0.2mm,red]  (1.5,2)-- (2.5,1)-- (1.5,0);
\draw [line width=0.2mm,red]  (5.5,2)-- (5.5,1)-- (3.5,0);
\draw [line width=0.2mm,red]  (5.5,2)-- (6.5,1)-- (5.5,0);
\draw [line width=0.2mm,red]  (2.5,1)--(2.5,0);
\draw [line width=0.2mm,red]  (5.5,1)--(4.5,0);
\draw [line width=0.2mm,red] (7,0.5) -- (6.5,0);
%

\end{tikzpicture}
       \caption{}
    \end{subfigure}
    \begin{subfigure}[t]{0.32\textwidth}
         \begin{tikzpicture}[scale=0.6]

\draw[step=1cm,dotted, thin] (0,0) grid (7,7);
\draw [line width=0.2mm,black] (1,0) -- (1,7);
\draw [line width=0.2mm,black] (0,0) -- (0,7);

\draw [line width=0.2mm,black] (3,-0.03) -- (3,0.03);

\draw [line width=0.2mm,black] (2,0) -- (2,1);
\draw [line width=0.2mm,black] (3,0) -- (3,1) -- (2,2);
\draw [line width=0.2mm,black] (3,0) -- (4,1);
\draw [line width=0.2mm,black] (5,0) -- (5,1);
\draw [line width=0.2mm,black] (5,0) -- (6,1) -- (4,2);
\draw [line width=0.2mm,black] (6,0) -- (7,1);
\draw [line width=0.2mm,black] (6,0) -- (7,0.5);

\draw [line width=0.2mm,black] (3,1) -- (3,2) -- (4,3);
\draw [line width=0.2mm,black] (6,1) -- (5,2) -- (5,3);
\draw [line width=0.2mm,black] (7,2) -- (7,3);
\draw [line width=0.2mm,black] ((7,1.5)--(6,2);

\draw [line width=0.2mm,black] (2,2) -- (2,3);
\draw [line width=0.2mm,black] (2,2) -- (3,3) -- (2,4);
\draw [line width=0.2mm,black] (5,2) -- (6,3) -- (6,4);

\draw [line width=0.2mm,black] (3,3) -- (3,4) -- (2,5);
\draw [line width=0.2mm,black] (4,3) -- (4,4);
\draw [line width=0.2mm,black] (4,3) -- (5,4) -- (3,5);
\draw [line width=0.2mm,black] (6,3) -- (7,4);
\draw [line width=0.2mm,black] (7,4.5) -- (6,5) -- (6,6);

\draw [line width=0.2mm,black] (5,4) -- (4,5);
\draw [line width=0.2mm,black] (6,4) -- (5,5)--(5,6);

\draw [line width=0.2mm,black] (2,5) -- (2,6); 
\draw [line width=0.2mm,black] (2,5) -- (3,6) -- (2,7); 
\draw [line width=0.2mm,black] (2,5) -- (4,6);
\draw [line width=0.2mm,black] (6,5) -- (6,6) -- (4,7); 
\draw [line width=0.2mm,black] (6,5) -- (7,6) -- (6,7);

\draw [line width=0.2mm,black] (3,6) -- (3,7);
\draw [line width=0.2mm,black] (6,6) -- (5,7);

\node[black] () at (-0.5,0){$-3$};
\node[black] () at (-0.5,1){$-2$};
\node[black] () at (-0.5,2){$-1$};
\node[black] () at (-0.3,3){$0$};
\node[black] () at (-0.3,4){$1$};
\node[black] () at (-0.3,5){$2$};
\node[black] () at (-0.3,6){$3$};
\node[black] () at (-0.3,7){$4$};

\node[black] () at (1,-0.4){$1$};
\node[black] () at (2,-0.4){$2$};
\node[black] () at (3,-0.4){$3$};
\node[black] () at (4,-0.4){$4$};
\node[black] () at (5,-0.4){$5$};
\node[black] () at (6,-0.4){$6$};
\node[black] () at (7,-0.4){$7$};

\draw [line width=0.2mm,red] (0.5,0) -- (0.5,7);
\draw [line width=0.2mm,red] (1.5,0) -- (1.5,7);

\draw [line width=0.2mm,red] (2.5,7.03) -- (2.5,6.97);
\draw [line width=0.2mm,red] (3.5,7.03) -- (3.5,6.97);
\draw [line width=0.2mm,red] (4.5,7.03) -- (4.5,6.97);

\draw [line width=0.2mm,red]  (6.5,7)-- (7,6.5);
\draw [line width=0.2mm,red]  (5.5,7)-- (6.5,6);
\draw [line width=0.2mm,red] (3.5,7) --(5.5,6)--(5.5,5);
\draw [line width=0.2mm,red]  (3.5,7)-- (4.5,6)-- (4.5,5);
\draw [line width=0.2mm,red]  (3.5,7)-- (3.5,6);
\draw [line width=0.2mm,red]  (1.5,7)-- (2.5,6);

\draw [line width=0.2mm,red]  (7,5.5)-- (6.5,5);
\draw [line width=0.2mm,red]  (4.5,6)-- (3.5,5);
\draw [line width=0.2mm,red]  (4.5,6)-- (2.5,5)-- (4.5,4);

\draw [line width=0.2mm,red]  (5.5,5)-- (7,4.25);
\draw [line width=0.2mm,red]  (7,3.5)-- (6.5,3);
\draw [line width=0.2mm,red]  (5.5,5)-- (6.5,4);
\draw [line width=0.2mm,red] (4.5,5)--(5.5,4)--(5.5,3);
\draw [line width=0.2mm,red]  (2.5,5)-- (3.5,4)-- (3.5,3);
\draw [line width=0.2mm,red]  (1.5,5)-- (2.5,4);

\draw [line width=0.2mm,red]  (5.5,4)-- (4.5,3)-- (4.5,2);
\draw [line width=0.2mm,red]  (1.5,4)--(2.5,3);

\draw [line width=0.2mm,red]  (6.5,3)-- (6.5,2);
\draw [line width=0.2mm,red]  (6.5,3)-- (5.5,2);
\draw [line width=0.2mm,red]  (4.5,3)-- (3.5,2)-- (4.5,1);
\draw [line width=0.2mm,red]  (3.5,3)-- (2.5,2);

\draw [line width=0.2mm,red]  (3.5,2)-- (5.5,1);
\draw [line width=0.2mm,red]  (5.5,2)-- (6.5,1)-- (5.5,0);
\draw [line width=0.2mm,red]  (5.5,2)-- (7,1.25);
\draw [line width=0.2mm,red]  (3.5,2)--(3.5,1);
\draw [line width=0.2mm,red]  (1.5,2)-- (2.5,1)--(2.5,0);

\draw [line width=0.2mm,red]  (7,0.25)-- (6.5,0);
\draw [line width=0.2mm,red]  (4.5,1)-- (4.5,0);
\draw [line width=0.2mm,red]  (4.5,1)-- (3.5,0);

\end{tikzpicture}
       \caption{}
    \end{subfigure}
    \caption{Graphical illustration of Theorem \ref{proM}. The twofold dual on panel (c) is a shifted copy of the primary forest given in (a).}
\label{Fig2}\end{figure*}
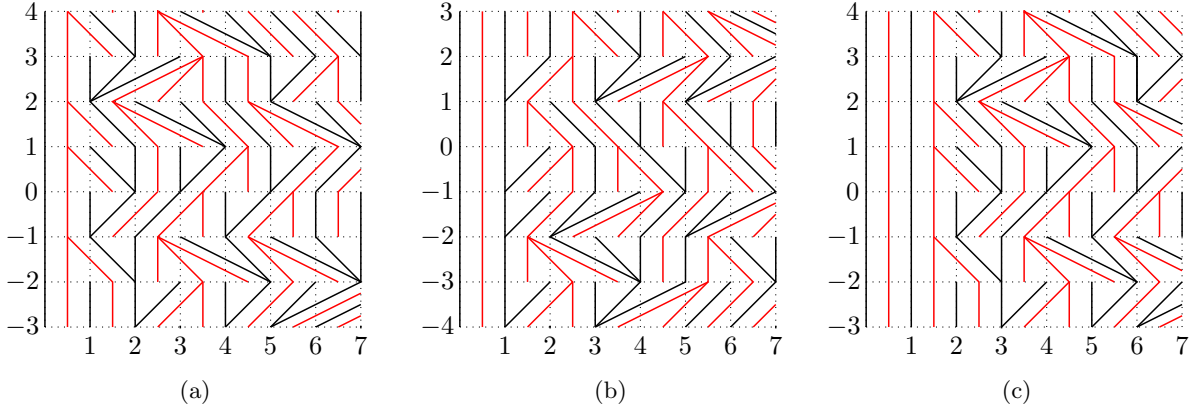

The next result describes the case of a GW  reproduction mapping $U$, that is when the corresponding offspring numbers $\{u(x),x\in\mathbb N\}$ are independent and identically distributed.
\begin{proposition}\label{JJ}
Referring to Definition \ref{deg} consider a reproduction mapping $U$ together with its dual mapping $V$. 
The  mapping $U$ generates a GW reproduction law
$$\rP(u(x)=k)=P_{k},\quad k\ge0,\quad x\ge1,$$ 
if and only if the dual  offspring numbers  have a representation
\begin{align}
  (v(1),v(2),\ldots)&=(\xi_{1}+1,\underbrace{0,\ldots,0}_{\eta_1},\xi_{2}+1,\underbrace{0,\ldots,0}_{\eta_2},\ldots), \label{hata}
 \end{align}
where $\xi_1,\eta_1,\xi_2,\eta_2,\ldots$ are mutually independent $\mathbb N_0$-valued random variables with the marginal distributions
\begin{align*}
 \rP( \xi_i=k)=P_{0}^k(1-P_{0}),\quad
 \rP(\eta_i=k)={P_{k+1}\over 1-P_{0}},\quad k\ge0,\quad i\ge1.
\end{align*}
In this case, the marginal dual reproduction law has a linear-fractional distribution
\begin{align}
\rE s^{v(x)}&=1-\hat q(x)+\hat q(x){P_{0}s\over 1-(1-P_{0})s},  \label{dlf}
\end{align}
where 
\begin{align}
\hat q(1)=1,\quad \hat q(x)=(1-P_{0})^{-1}\sum _{k=1}^{x-1}P_{k}\hat q(x-k),\quad x\ge2.\label{p0x}
\end{align}

\end{proposition}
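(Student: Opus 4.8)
\emph{The plan} is to factor the statement through a purely combinatorial block decomposition of the offspring sequence, after which both implications and the linear-fractional formula reduce to the renewal structure of an i.i.d.\ sequence. Throughout assume $P_0<1$, so that $u$ has infinitely many positive entries. First I would record the pointwise form of $V=U^-$ implied by \eqref{VxU}: writing $y_1<y_2<\cdots$ for the ranks with $u(y_i)>0$, and $m_i=u(y_i)$, $M_i=m_1+\cdots+m_i$ (with $y_0=M_0=0$), the identity $V(x)=\min\{y:U(y)\ge x\}$ shows that $V$ equals $y_i$ on the block $x\in\{M_{i-1}+1,\dots,M_i\}$ and omits every rank $y$ with $u(y)=0$. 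Hence the increments $v(x)=V(x)-V(x-1)$ vanish except at the endpoints $x=M_{i-1}+1$, where $v(M_{i-1}+1)=y_i-y_{i-1}$. Putting $\xi_i:=y_i-y_{i-1}-1$ (the number of zero entries of $u$ between its $(i-1)$-th and $i$-th positive entries) and $\eta_i:=m_i-1$, this says precisely that $v$ is a concatenation of blocks whose $i$-th member is a positive value $\xi_i+1$ followed by $\eta_i$ zeros, which is the structure \eqref{hata}. The resulting map $u\leftrightarrow v$ is a bijection between offspring sequences having infinitely many positive entries (with $v(1)\ge1$), and it is this bijection---rather than the duality map itself---that carries the argument. \emph{I expect this first step to be the main obstacle}: establishing cleanly from \eqref{VxU} that $V$ skips exactly the null ranks and is flat of width $m_i$ on the others, and that the induced map on sequences is invertible.

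For the forward implication, suppose the $u(x)$ are i.i.d.\ with law $(P_k)$. The blocks above are exactly the excursions of the i.i.d.\ sequence between successive positive values, so the strong Markov (renewal) property makes the pairs $(\xi_i,\eta_i)$ i.i.d.\ in $i$; within a block the run-length $\xi_i$ of zeros and the terminating value $m_i=\eta_i+1$ are independent, since after any run of zeros the first nonzero entry is distributed as $u$ conditioned to be positive, independently of the run length. The marginals follow at once: $\xi_i$ is the number of $P_0$-failures before a $(1-P_0)$-success, giving $\rP(\xi_i=k)=P_0^k(1-P_0)$, and $\rP(\eta_i=k)=\rP(u=k+1\mid u>0)=P_{k+1}/(1-P_0)$. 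This is \eqref{hata} with the asserted independence and marginals.

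For the converse, assume $v$ has the form \eqref{hata} with independent $\xi_i,\eta_i$ of the stated laws. By the bijection of the first paragraph (equivalently, recalling from Theorem \ref{proM} that the twofold dual returns $U$ up to the shift $x\mapsto U(x-1)+1$), the offspring sequence of $U$ must be the concatenation of blocks consisting of $\xi_i$ zeros followed by the value $\eta_i+1$; it remains to show this sequence is i.i.d.\ with law $(P_k)$, i.e.\ to invert the renewal decomposition. I would argue coordinate by coordinate: $\rP(u(1)=0)=\rP(\xi_1\ge1)=\sum_{k\ge1}P_0^k(1-P_0)=P_0$, while for $m\ge1$ the event $\{u(1)=m\}=\{\xi_1=0,\ \eta_1=m-1\}$ has probability $(1-P_0)\cdot P_m/(1-P_0)=P_m$. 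The memorylessness of the geometric law of $\xi_1$ then shows that, conditionally on $u(1)$, the sequence $u(2),u(3),\dots$ again obeys the same block law, so by induction the coordinates are i.i.d.\ with law $(P_k)$.

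Finally, for the marginal law \eqref{dlf}, note that $v(x)>0$ exactly when $x$ is a block endpoint, that is $x-1\in\{M_1,M_2,\dots\}$; thus $\hat q(x):=\rP(v(x)>0)$ is the renewal probability that $x-1$ is a partial sum of the i.i.d.\ spacings $m_i$ with $\rP(m_i=k)=P_k/(1-P_0)$, and the renewal equation $\hat q(x)=(1-P_0)^{-1}\sum_{k=1}^{x-1}P_k\,\hat q(x-k)$ with $\hat q(1)=1$ is exactly \eqref{p0x}. Conditioned on $v(x)>0$, the value $v(x)=\xi+1$ is one plus the geometric number of skipped null ranks, so $\rP(v(x)=k\mid v(x)>0)=P_0^{k-1}(1-P_0)$; summing the series yields the linear-fractional generating function in \eqref{dlf}, with the nonzero-offspring parameter $1-P_0$ appearing as the success probability of the conditioned geometric law.
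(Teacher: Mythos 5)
Your proposal is correct and follows essentially the same route as the paper: your block decomposition of $u$ into runs of zeros terminated by positive values, and the induced form of $v$, is exactly the content of the paper's Lemma \ref{L} (relation \eqref{hata1} there), and your renewal equation for $\hat q(x)$ is obtained by the same conditioning on the first block that the paper uses. The extra detail you supply on the converse (inverting the block map coordinate by coordinate) is carried in the paper by the observation that \eqref{hata1} with independent $\xi_i,\eta_i$ of the stated laws is \emph{equivalent} to $u$ being i.i.d.\ with law $(P_k)$.

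One point deserves attention: your computation of the conditional law, $\rP(v(x)=k\mid v(x)>0)=P_0^{k-1}(1-P_0)$, sums to the generating function $(1-P_0)s/(1-P_0s)$, which is \emph{not} the expression $P_0s/(1-(1-P_0)s)$ printed in \eqref{dlf}. Your version is the correct one: it agrees with \eqref{lfg} of Theorem \ref{th} (where $q=1-P_0$) and with the direct observation that $v(1)$ is the index of the first rank with positive offspring, hence geometric with success probability $1-P_0$. So \eqref{dlf} as printed has the roles of $P_0$ and $1-P_0$ interchanged; you should state this explicitly rather than assert that your series ``yields the generating function in \eqref{dlf}'', which it literally does not.
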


 A natural question arising in connection to Proposition \ref{JJ} is whether it is possible that the primary and its dual reproduction mappings are both GW?  The answer is no, since the dual law always assigns at least one offspring to the particle of rank 1. The closest the  GW dual one can get is a {\it GW  reproduction with an eternal particle}, which by definition is 
a rank-dependent GW reproduction mapping $V$  whose offspring numbers are such that $v(1)\ge1$, and $v(2),v(3),\ldots$ have a common distribution.
 The following result significantly extends Proposition 3.6 in \cite{KRS}.

\begin{theorem} \label{th}
Consider a GW reproduction mapping $U$. Its pathwise dual $V$ is a GW  reproduction with an eternal particle  if and only if
\begin{align}
 \rE s^{u(x)}&=1-q+q{ps\over 1-(1-p)s},\quad p,q\in(0,1].\label{lft}\end{align}
 In this case,
 \begin{align}
\rE s^{v(1)}&={qs\over 1-(1-q)s}, \qquad \rE s^{v(x)}=1-p+p{qs\over 1-(1-q)s},\quad x\ge 2.  \label{lfg}
\end{align}
 \end{theorem}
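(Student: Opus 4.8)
The plan is to build everything on the representation \eqref{hata} furnished by Proposition \ref{JJ}, which already reduces the problem from the unwieldy dual mapping $V$ to the explicit block sequence of that display, with $\xi_i$ geometric of parameter $1-P_0$ and $\eta_i$ distributed according to $\rP(\eta_i=k)=P_{k+1}/(1-P_0)$. The organizing observation I would isolate first is that whether the zero-runs are \emph{memoryless} is exactly what decides independence of the $v(x)$, and that memorylessness of $\eta_i$ is equivalent to the linear-fractional shape of $\{P_k\}$. Note also that $v(1)=\xi_1+1\ge1$ always holds, so the eternal-particle property is automatic and the entire content of the theorem concerns the joint law of $v(2),v(3),\dots$.

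For the necessity direction, assume $V$ is a GW reproduction with an eternal particle, so that $v(1),v(2),\dots$ are independent and $v(2),v(3),\dots$ share a common law; write $p=\rP(v(2)\ge1)$. Since position $1$ is always a block start and the first zero-run occupies precisely positions $2,\dots,1+\eta_1$, the identity $\{v(2)=\cdots=v(m+1)=0\}=\{\eta_1\ge m\}$ holds deterministically from the representation. Independence and identical distribution of $v(2),\dots,v(m+1)$ then give $\rP(\eta_1\ge m)=(1-p)^m$ for every $m\ge1$, so $\eta_1$ is geometric. Substituting this back into $\rP(\eta_1=k)=P_{k+1}/(1-P_0)$ forces $P_k=(1-P_0)\,p\,(1-p)^{k-1}$ for $k\ge1$, which is exactly \eqref{lft} with $q=1-P_0$.

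For the sufficiency direction, assume \eqref{lft}, so that $P_0=1-q$ and $P_k=qp(1-p)^{k-1}$, whence $\eta_i$ is geometric with $\rP(\eta_i=k)=p(1-p)^k$ and $\xi_i$ is geometric with $\rP(\xi_i=k)=q(1-q)^k$. I would then replace the block description of \eqref{hata} by an equivalent sequential one: reveal each zero-run one step at a time, so that after any nonzero entry and after any zero already placed, the next position is independently a zero with probability $1-p$ or a fresh block start with probability $p$. This reformulation is legitimate precisely because the geometric law is memoryless, $\rP(\eta_i\ge k+1\mid\eta_i\ge k)=1-p$. Consequently the nonzero positions among $x\ge2$ form a renewal process with geometric gaps and i.i.d. marks $\xi+1$, making $v(2),v(3),\dots$ independent and identically distributed and independent of $v(1)$. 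Reading off the marginals yields $\rE s^{v(1)}=qs/(1-(1-q)s)$ and $\rE s^{v(x)}=1-p+p\,qs/(1-(1-q)s)$ for $x\ge2$, i.e. \eqref{lfg}; as a consistency check, this matches $\hat q(x)=p$ for $x\ge2$, the value predicted by the recursion \eqref{p0x}.

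The hard part is the independence claim in the sufficiency direction, since the entries of \eqref{hata} are entangled through the block boundaries and one must argue that geometric zero-runs genuinely decouple them. I expect the cleanest route to be the renewal/memorylessness reformulation above, verifying that drawing each $\eta_i$ upfront and revealing it by i.i.d. per-step coin flips induce the same law on $(v(2),v(3),\dots)$; a brute-force factorization of the joint distribution is possible but more cumbersome to bookkeep at the boundary near position $1$. Everything else amounts to the routine generating-function computations indicated above.
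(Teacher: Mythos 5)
Your proof is correct, and it departs from the paper's argument in both directions while resting, as the paper's does, on the block representation \eqref{hata} of Proposition \ref{JJ}. For necessity, the paper never touches the blocks directly: it feeds the constancy of $\hat q(x)=\rP(v(x)>0)$ for $x\ge2$ into the recursion \eqref{p0x}, deduces $P_{n}=\hat q\sum_{k\ge n}P_k$ for all $n\ge1$, and reads off the geometric tail; note that this consumes only the identical marginals, not independence. You instead use independence to get $\rP(\eta_1\ge m)=\rP(v(2)=\cdots=v(m+1)=0)=(1-p)^m$ straight from the block structure, which is more direct and probabilistic, and equally valid under the theorem's hypotheses. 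For sufficiency the comparison is reversed: the paper computes the marginals \eqref{lfg} from \eqref{dlf} and \eqref{p0x} and then merely asserts that ``it is easy to verify independence of $v(1),v(2),\ldots$'', whereas you supply the missing argument --- memorylessness of the geometric $\eta_i$ lets you re-describe the zero-runs by i.i.d.\ per-site Bernoulli$(p)$ trials, so the nonzero sites among $x\ge2$ form a Bernoulli renewal process carrying i.i.d.\ marks $\xi_i+1$, independent of $v(1)=\xi_1+1$. That renewal step is precisely the nontrivial content the paper omits, and your identification of memorylessness of the zero-run lengths as the exact mechanism behind independence is the right one. Both arguments implicitly need $P_0<1$ (so that $V$ is proper and $p>0$), which is covered by the standing restriction $p,q\in(0,1]$.
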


\section{Examples 
}\label{BD}

\noindent
{\bf Pure death rank-dependent GW.} 
A distinct forest structure appears in the case when the offspring numbers take values 0 or 1
$$f_{t,x}(s)=p_{t,x}+(1-p_{t,x})s,\quad x\in\mathbb N,\quad t\in\mathbb Z.$$
Each dual lineage followed upwards, eventually vanishes without branching. 
Given $p_{t,x}\equiv p_x$, the dual reproduction  is not rank-dependent GW because of the dependence in the joint distribution
\[\rP(v(1)=k_1,\ldots,v(m)=k_m)=p_1\cdots p_{k_1-1}(1-p_{k_1})\prod_{l=1}^{m-1}p_{k_1+\ldots+k_l+1}\cdots p_{k_1+\ldots+k_{l+1}-1}(1-p_{k_1+\ldots+k_{l+1}}).\]

\noindent
{\bf Birth-death  GW reproduction.} 
Consider a GW reproduction law $\rP(u=k)=p_k$ assuming $p_0+p_1+p_2=1$. If $p_2=0$, then the dual reproduction is GW with a shifted geometric distribution
\[\rP(v(x)=k)=p_0^{k-1}(1-p_0),\quad k\ge1.\]
 If $p_1=0$, then the dual reproduction is rank-dependent GW described by the example given in the Introduction, with $p=p_0$. On the other hand, if $p_0=0$, then the dual reproduction law is not rank-dependent GW because of the following dependence:
\begin{align*}
& \rP(v(1)=1)=1,\quad  \rP(v(2)=0,v(3)=1)=p_2,\\
& \rP(v(2)=1, v(3)=0)=p_1p_2,\quad  \rP(v(1)=0,v(3)=1)=p_1^2.
 \end{align*}


\noindent
{\bf Bounded GW processes.} 
Consider a specific example of the rank-dependent GW process given by
\[f_{t,x}(s)=
\left\{
\begin{array}{rl}
 f(s), &  x\in[1,B_t],  \\
1,  &  x>B_t,    
\end{array}
\right. \quad s\in[0,1], \quad t\ge0.
\]
What we get is a version of a truncated GW process with a stationary reproduction $f$, where the number of particles, allowed to reproduce at time $t$, is bounded by $B_t$. An interesting result for such processes dealing with the supercritical case was obtained in \cite{Z}.

\vspace{0.4cm}
\noindent
{\bf GW processes with immigration.} 
Consider a rank-dependent GW process with $f_{t,1}(s)= sg_t(s)$ and $f_{t,x}\equiv f_t$, $x\ge2$. This is a GW process with an eternal particle  in a varying environment. Removing the eternal particle  of rank 1 and keeping its offspring as immigrants, we arrive at a GW with immigration. The GW process with immigration are well-studied in the case of a stationary reproduction  $f_{t}=f$ and varying immigration $\{g_t\}_{t\ge0}$, see \cite{Ra}.
The case of varying $\{g_t,f_t\}_{t\ge0}$, has got less attention in the literature, see however \cite{MO}.

\vspace{0.4cm}\noindent
{\bf GW processes with emigration.} 
Consider a time-homogeneous GW process with an eternal particle,  such that \eqref{lft} holds for $x\ge2$, and $u(1)\ge1$ has an arbitrary distribution. Its dual Markov chain can be interpreted in terms of a GW process with emigration (catastrophes, disasters), with a random number $\eta_t\stackrel{d}{=}u(1)-1$ of particles being removed from generation $t$. If the current size $Y_t$ does not exceed $\eta_t$, the population dies out. One of the first papers addressing this model was \cite{V}, where the critical case was studied under the assumption $\eta_t\equiv1$. 

It was shown in \cite{Gr} that if the GW reproduction is supercritical and the numbers of emigrants $\{\eta_t\}_{t\ge0}$ are independent copies of $\eta$, then the the GW process with emigration goes extinct with probability 1 iff $\rE \log (\eta+1)=\infty $. On the other hand, a well-known result by \cite{AH} says that a subcritical GW process with immigration has a stationary distribution iff $\rE  \log (\eta+1)<\infty $ for the number of immigrants $u(1)-1\stackrel{d}{=}\eta$.

\vspace{0.4cm}\noindent
{\bf Rank-dependent GW process with a carrying capacity.}  
Consider the time-homogeneous case, $f_{t,x}=f_x$, when the reproduction law is variable along the spatial position. Our setting is suitable for modeling population size dependent reproduction in a way which is different from that of \cite{K,KS}. Let $m_x=f'_x(1)$ be the mean offspring number for the particle of rank $x$. Suppose $m_1>1$ and $m_x$ monotonely decreases with $x$ so that for some $K\in\mathbb N$,
\[m_1+\ldots+m_x\ge x,\quad x\le K,\quad m_1+\ldots+m_x<x,\quad x> K.\]
Such a $K$ can be viewed as the carrying capacity of a population of individuals which produce less than 1 child per individual when the size of the population exceeds $K$.

\vspace{0.4cm}\noindent
{\bf Embeddable rank-dependent GW-processes.} 
Embeddability into continuous time Markov branching processes is not fully resolved issue for basic GW processes \cite[Ch III.12]{AN}. Several explicit examples of embeddable GW processes can be found in \cite{SL}.  One known class of embeddable GW processes in varying environments is the case of linear-fractional reproduction addressed in Theorem \ref{th}. 

Consider a continuous time linear birth-death process $\{Z(t), t\ge t_0\}$ with the variable birth and death rates $\{\lambda(t),\mu(t)\}_{t\in\mathbb R}$ per individual. 
%
%
It is well-known that such a process has linear-fractional distributions. 
By \cite{Ke},
$$\rE(s^{Z(t)}|Z(t_0 )=1)=1-q(t_0,t)+q(t_0,t){p(t_0,t)s\over 1-(1-p(t_0,t))s},$$
where
\begin{align*}
 q(t_0,t)&=\Big(1+\int_{t_0 }^te^{\rho(t_0,u)}\mu(u)du\Big)^{-1},\quad p(t_0,t)=e^{\rho(t_0,t)}q(t_0,t),\quad \rho(t_0,t)=\int_{t_0 }^t(\mu(u)-\lambda(u))du.
\end{align*}
A linear-fractional  GW process with varying parameters $(q_t,p_t)$ in the expression \eqref{lft} given by
\begin{align*}
q_{t}&=\Big(1+\int_{t-1}^te^{\rho_u}\mu(u)du\Big)^{-1},\quad p_t=e^{\rho_t}q_t,\quad \rho_u:=\rho(t-1,u)=\int_{t-1}^u(\mu(v)-\lambda(v))dv.
\end{align*}
can be embedded in a birth-death process.
Figure \ref{Fig3} illustrates the graphical representation for such an embedding. See also a recent result \cite{FL} presenting a different approach towards dual random forests in a continuous time setting.

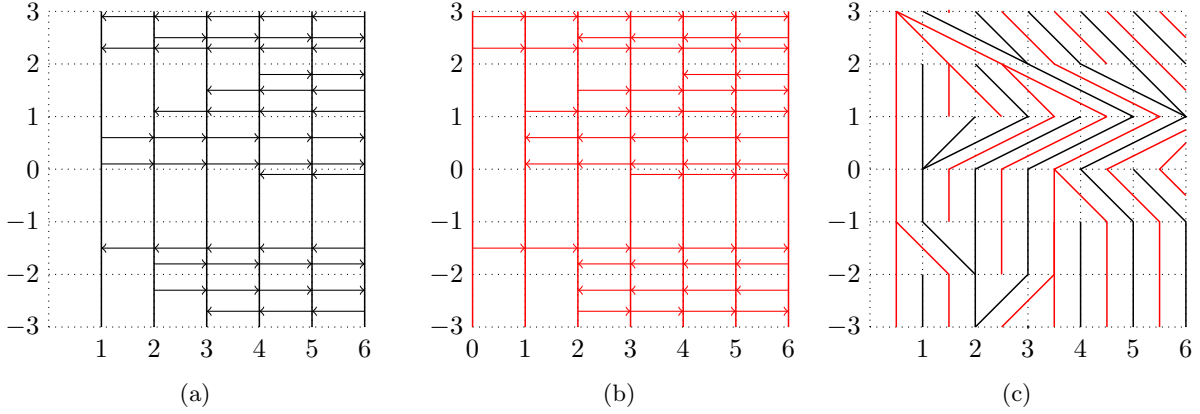
\begin{figure*}[t!]
    \centering
    \begin{subfigure}[t]{0.32\textwidth}
         \begin{tikzpicture}[scale=0.7]
 
\draw[step=1cm,dotted, thin] (0,0) grid (6,6);

\draw [line width=0.2mm,black] (1,0) -- (1,6);
\draw [line width=0.2mm,black] (2,0) -- (2,6);
\draw [line width=0.2mm,black] (3,0) -- (3,6);
\draw [line width=0.2mm,black] (4,0) -- (4,6);
\draw [line width=0.2mm,black] (5,0) -- (5,6);
\draw [line width=0.2mm,black] (6,0) -- (6,6);

\draw [->]  (2,0.7) -- (3,0.7);\draw [->]  (3,0.7) -- (4,0.7);\draw [->]  (4,0.7) -- (5,0.7);\draw [->]  (5,0.7) -- (6,0.7);
\draw [<-]  (3,0.3) -- (4,0.3);\draw [<-]  (4,0.3) -- (5,0.3);\draw [<-]  (5,0.3) -- (6,0.3);

\draw [->]  (2,1.2) -- (3,1.2);\draw [->]  (3,1.2) -- (4,1.2);\draw [->]  (4,1.2) -- (5,1.2);\draw [->]  (5,1.2) -- (6,1.2);
\draw [<-]  (1,1.5) -- (2,1.5);\draw [<-]  (2,1.5) -- (3,1.5);\draw [<-]  (3,1.5) -- (4,1.5);\draw [<-]  (4,1.5) -- (5,1.5);\draw [<-]  (5,1.5) -- (6,1.5);

\draw [<-]  (4,2.9) -- (5,2.9);\draw [<-]  (5,2.9) -- (6,2.9);

\draw [->]  (1,3.1) -- (2,3.1);\draw [->]  (2,3.1) -- (3,3.1);\draw [->]  (3,3.1) -- (4,3.1);\draw [->]  (4,3.1) -- (5,3.1);\draw [->]  (5,3.1) -- (6,3.1);
\draw [->]  (1,3.6) -- (2,3.6);\draw [->]  (2,3.6) -- (3,3.6);\draw [->]  (3,3.6) -- (4,3.6);\draw [->]  (4,3.6) -- (5,3.6);\draw [->]  (5,3.6) -- (6,3.6);

\draw [<-]  (3,4.5) -- (4,4.5);\draw [<-]  (4,4.5) -- (5,4.5);\draw [<-]  (5,4.5) -- (6,4.5);
\draw [<-]  (2,4.1) -- (3,4.1);\draw [<-]  (3,4.1) -- (4,4.1);\draw [<-]  (4,4.1) -- (5,4.1);\draw [<-]  (5,4.1) -- (6,4.1);
\draw [->]  (4,4.8) -- (5,4.8);\draw [->]  (5,4.8)-- (6,4.8);

\draw [<-]  (1,5.9) -- (2,5.9);\draw [<-]  (2,5.9) -- (3,5.9);\draw [<-]  (3,5.9) -- (4,5.9);\draw [<-]  (4,5.9) -- (5,5.9);\draw [<-]  (5,5.9) -- (6,5.9);
\draw [<-]  (1,5.3) -- (2,5.3);\draw [<-]  (2,5.3) -- (3,5.3);\draw [<-]  (3,5.3) -- (4,5.3);\draw [<-]  (4,5.3) -- (5,5.3);\draw [<-]  (5,5.3) -- (6,5.3);
\draw [->]  (2,5.5) -- (3,5.5);\draw [->]  (3,5.5) -- (4,5.5);\draw [->]  (4,5.5) -- (5,5.5);\draw [->]  (5,5.5)-- (6,5.5);

\node[black] () at (-0.5,0){$-3$};
\node[black] () at (-0.5,1){$-2$};
\node[black] () at (-0.5,2){$-1$};
\node[black] () at (-0.3,3){$0$};
\node[black] () at (-0.3,4){$1$};
\node[black] () at (-0.3,5){$2$};
\node[black] () at (-0.3,6){$3$};

\node[black] () at (1,-0.4){$1$};
\node[black] () at (2,-0.4){$2$};
\node[black] () at (3,-0.4){$3$};
\node[black] () at (4,-0.4){$4$};
\node[black] () at (5,-0.4){$5$};
\node[black] () at (6,-0.4){$6$};

\end{tikzpicture}
       \caption{}
    \end{subfigure}
    ~ 
 \begin{subfigure}[t]{0.32\textwidth}
         \begin{tikzpicture}[scale=0.7]

\draw[step=1cm,dotted, thin] (0,0) grid (6,6);

\node[black] () at (-0.5,0){$-3$};
\node[black] () at (-0.5,1){$-2$};
\node[black] () at (-0.5,2){$-1$};
\node[black] () at (-0.3,3){$0$};
\node[black] () at (-0.3,4){$1$};
\node[black] () at (-0.3,5){$2$};
\node[black] () at (-0.3,6){$3$};

\node[black] () at (0,-0.4){$0$};
\node[black] () at (1,-0.4){$1$};
\node[black] () at (2,-0.4){$2$};
\node[black] () at (3,-0.4){$3$};
\node[black] () at (4,-0.4){$4$};
\node[black] () at (5,-0.4){$5$};
\node[black] () at (6,-0.4){$6$};

\draw [line width=0.2mm,red] (0,0) -- (0,6);
\draw [line width=0.2mm,red] (1,0) -- (1,6);
\draw [line width=0.2mm,red] (2,0) -- (2,6);
\draw [line width=0.2mm,red] (3,0) -- (3,6);
\draw [line width=0.2mm,red] (4,0) -- (4,6);
\draw [line width=0.2mm,red] (5,0) -- (5,6);
\draw [line width=0.2mm,red] (6,0) -- (6,6);

\draw [<-,red]  (2,0.7) -- (3,0.7);\draw [<-,red]  (3,0.7) -- (4,0.7);\draw [<-,red]  (4,0.7) -- (5,0.7);\draw [<-,red]  (5,0.7) -- (6,0.7);
\draw [->,red]  (2,0.3) -- (3,0.3);
\draw [->,red]  (3,0.3) -- (4,0.3);\draw [->,red]  (4,0.3) -- (5,0.3);\draw [->,red]  (5,0.3) -- (6,0.3);

\draw [<-,red]  (2,1.2) -- (3,1.2);\draw [<-,red]  (3,1.2) -- (4,1.2);\draw [<-,red]  (4,1.2) -- (5,1.2);\draw [<-,red]  (5,1.2) -- (6,1.2);
\draw [->,red]  (0,1.5) -- (1,1.5);\draw [->,red]  (1,1.5) -- (2,1.5);\draw [->,red]  (2,1.5) -- (3,1.5);\draw [->,red]  (3,1.5) -- (4,1.5);\draw [->,red]  (4,1.5) -- (5,1.5);\draw [->,red]  (5,1.5) -- (6,1.5);

\draw [->,red]  (3,2.9) -- (4,2.9);\draw [->,red]  (4,2.9) -- (5,2.9);\draw [->,red]  (5,2.9) -- (6,2.9);

\draw [<-,red]  (1,3.1) -- (2,3.1);\draw [<-,red]  (2,3.1) -- (3,3.1);\draw [<-,red]  (3,3.1) -- (4,3.1);\draw [<-,red]  (4,3.1) -- (5,3.1);\draw [<-,red]  (5,3.1) -- (6,3.1);
\draw [<-,red]  (1,3.6) -- (2,3.6);\draw [<-,red]  (2,3.6) -- (3,3.6);\draw [<-,red]  (3,3.6) -- (4,3.6);\draw [<-,red]  (4,3.6) -- (5,3.6);\draw [<-,red]  (5,3.6) -- (6,3.6);

\draw [->,red]  (2,4.5) -- (3,4.5);\draw [->,red]  (3,4.5) -- (4,4.5);\draw [->,red]  (4,4.5) -- (5,4.5);\draw [->,red]  (5,4.5) -- (6,4.5);
\draw [->,red]  (1,4.1) -- (2,4.1);\draw [->,red]  (2,4.1) -- (3,4.1);\draw [->,red]  (3,4.1) -- (4,4.1);\draw [->,red]  (4,4.1) -- (5,4.1);\draw [->,red]  (5,4.1) -- (6,4.1);
\draw [<-,red]  (4,4.8) -- (5,4.8);\draw [<-,red]  (5,4.8)-- (6,4.8);

\draw [->,red]  (0,5.9) -- (1,5.9);\draw [->,red]  (1,5.9) -- (2,5.9);\draw [->,red]  (2,5.9) -- (3,5.9);\draw [->,red]  (3,5.9) -- (4,5.9);\draw [->,red]  (4,5.9) -- (5,5.9);\draw [->,red]  (5,5.9) -- (6,5.9);
\draw [->,red]  (0,5.3) -- (1,5.3);\draw [->,red]  (1,5.3) -- (2,5.3);\draw [->,red]  (2,5.3) -- (3,5.3);\draw [->,red]  (3,5.3) -- (4,5.3);\draw [->,red]  (4,5.3) -- (5,5.3);\draw [->,red]  (5,5.3) -- (6,5.3);
\draw [<-,red]  (2,5.5) -- (3,5.5);\draw [<-,red]  (3,5.5) -- (4,5.5);\draw [<-,red]  (4,5.5) -- (5,5.5);\draw [<-,red]  (5,5.5)-- (6,5.5);

\end{tikzpicture}
   
        \caption{}
    \end{subfigure}%
    \begin{subfigure}[t]{0.32\textwidth}
         \begin{tikzpicture}[scale=0.7]

\draw[step=1cm,dotted, thin] (0,0) grid (6,6);

\draw [line width=0.2mm,black] (3,-0.03) -- (3,0.03);

\draw [line width=0.2mm,black] (1,0) -- (1,1);
\draw [line width=0.2mm,black] (2,0) -- (2,1) -- (1,2) -- (1,3) -- (1,4) -- (1,5);
\draw [line width=0.2mm,black] (1,3) -- (2,4);
\draw [line width=0.2mm,black] (1,3) -- (3,4)-- (2,5);
\draw [line width=0.2mm,black] (2,1) -- (2,2) -- (2,3)-- (4,4);
\draw [line width=0.2mm,black] (2,0) -- (3,1) -- (3,2) -- (3,3) -- (5,4)-- (3,5)-- (1,6);
\draw [line width=0.2mm,black] (3,5)-- (2,6);
\draw [line width=0.2mm,black] (4,0) -- (4,1) -- (4,2);
\draw [line width=0.2mm,black] (5,0) -- (5,1) -- (5,2) -- (4,3) -- (6,4)-- (4,5)-- (3,6);
\draw [line width=0.2mm,black] (6,4)-- (5,5)-- (4,6);
\draw [line width=0.2mm,black] (6,0) -- (6,1) -- (6,2) -- (5,3);
\draw [line width=0.2mm,black] (6,5)-- (5,6);

\node[black] () at (-0.5,0){$-3$};
\node[black] () at (-0.5,1){$-2$};
\node[black] () at (-0.5,2){$-1$};
\node[black] () at (-0.3,3){$0$};
\node[black] () at (-0.3,4){$1$};
\node[black] () at (-0.3,5){$2$};
\node[black] () at (-0.3,6){$3$};

\node[black] () at (1,-0.4){$1$};
\node[black] () at (2,-0.4){$2$};
\node[black] () at (3,-0.4){$3$};
\node[black] () at (4,-0.4){$4$};
\node[black] () at (5,-0.4){$5$};
\node[black] () at (6,-0.4){$6$};


%
\draw [line width=0.2mm,red] (1.5,6.03) -- (1.5,5.97);
\draw [line width=0.2mm,red]  (0.5,0)-- (0.5,6);
\draw [line width=0.2mm,red]  (1.5,0)-- (1.5,1)-- (0.5,2);
\draw [line width=0.2mm,red]  (1.5,2)-- (1.5,3)-- (3.5,4)-- (2.5,5)-- (0.5,6);
\draw [line width=0.2mm,red]  (2.5,4)-- (1.5,5)-- (0.5,6);
\draw [line width=0.2mm,red]  (1.5,4)-- (1.5,5);
\draw [line width=0.2mm,red]  (2.5,1)-- (2.5,3)-- (4.5,4);
\draw [line width=0.2mm,red]  (2.5,0)-- (3.5,1)-- (3.5,3);
\draw [line width=0.2mm,red]  (3.5,0)-- (3.5,2);
\draw [line width=0.2mm,red]  (4.5,4)-- (2.5,5);
\draw [line width=0.2mm,red]  (4.5,5)-- (3.5,6);
\draw [line width=0.2mm,red]  (4.5,0)-- (4.5,2)-- (3.5,3)-- (5.5,4)-- (3.5,5)-- (2.5,6);
\draw [line width=0.2mm,red]  (5.5,0)-- (5.5,2)-- (4.5,3)-- (6,3.75);
\draw [line width=0.2mm,red]  (6,2.5)-- (5.5,3)-- (6,3.5);
\draw [line width=0.2mm,red]  (6,4.5)-- (4.5,6);
\draw [line width=0.2mm,red]  (6,5.5)-- (5.5,6);
\end{tikzpicture}
       \caption{}
    \end{subfigure}
    \caption{Graphical representation of the birth-death processes. Arrows to the left mean the death at the end position of the line of arrows. Arrows to the right mean splitting at the origin of the line of arrows.}
\label{Fig3}\end{figure*}

\vspace{0.4cm}\noindent
{\bf Defective rank-dependent GW.}  For any $V\in\Phi_0$, there is either finite or infinite limit $\bar V=\lim_{x\to\infty} V(x)$. We will call {\it defective} a random reproduction mapping $U\in\Phi_0$ such that  its dual $V$ satisfies $\rP(\bar V<\infty)>0$. In the defective case, a particle is able to produce infinitely many offspring. GW processes with a defective reproduction law
were studied in a recent paper \cite{SC}.

Turning to the non-linear birth-death processes, see for example \cite{SS}, we observe that in general, the embedding, discussed in the previous example, does not yield a rank-dependent GW process, as the number of offspring may depend on each other.  
An interesting exception is the pure death processes producing embedded pure death rank-dependent GW processes mentioned in the first example of this section. Given the time-homogeneous death rate $\mu_x$ for an individual of rank $x\ge1$, such that
\[\sum_{x=1}^\infty {1\over \mu_1+\ldots+\mu_x}<\infty,\]
we get a pure death process coming down from infinity, see for example \cite{SF}. Observe that in this case, the dual Markov chain gives a defective reproduction model which is not a rank-dependent GW process. 

\section{Proofs}\label{Spro}
Let reproduction mappings $U,V,\tilde U\in\Phi_0$, 
be connected as  $V=U^-$,  $\tilde U=V^-$. The corresponding offspring numbers are denoted $u(x), v(x)$ and $\tilde u(x)$.

  \begin{lemma}\label{L}
If $(\xi_i,\eta_i)$ are defined by
 \begin{align}
 (u(1),u(2),\ldots)&=(\underbrace{0,\ldots,0}_{\xi_1},\eta_{1}+1,\underbrace{0,\ldots,0}_{\xi_2},\eta_{2}+1,\ldots),\quad \xi_i,\eta_i\ge0,\quad i\in\mathbb N, \label{hata1}
\end{align}
then we have \eqref{hata} and 
 \begin{align}
 (\tilde u(1),\tilde u(2),\ldots)&=(1,\underbrace{0,\ldots,0}_{\xi_1},\eta_{1}+1,\underbrace{0,\ldots,0}_{\xi_2},\eta_{2}+1,\ldots) .\label{hatb}
\end{align}
\end{lemma}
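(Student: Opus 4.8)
The plan is to translate everything into the language of the cumulative reproduction mappings $U$, $V = U^-$, and $\tilde U = V^-$, where the generalized inverse acts transparently on step functions. First I would record the staircase shape of $U$. Writing $A_i = i + \sum_{j=1}^i \xi_j$ and $B_i = i + \sum_{j=1}^i \eta_j$ (with $A_0 = B_0 = 0$), relation \eqref{hata1} says precisely that $U$ is constant equal to $B_{i-1}$ on the indices $A_{i-1} \le x \le A_i - 1$ and satisfies $U(A_i) = B_i$; thus the nonzero offspring sit at the positions $A_i$, where $U$ jumps by $\eta_i + 1$, and between consecutive jumps $U$ stays flat over a run of $\xi_i + 1$ indices.

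Next I would compute $V = U^-$ directly from Definition \ref{deg}. For $B_{i-1} < x \le B_i$ the smallest $y$ with $U(y) \ge x$ is $y = A_i$, because $U(A_i - 1) = B_{i-1} < x \le B_i = U(A_i)$; hence $V(x) = A_i$ on the block $B_{i-1}+1 \le x \le B_i$. Differencing, and using $V(B_{i-1}) = A_{i-1}$, gives $v(B_{i-1}+1) = A_i - A_{i-1} = \xi_i + 1$ and $v(x) = 0$ at the remaining $\eta_i$ positions of the block, which is exactly the pattern \eqref{hata}. The point to notice is that the roles of $\xi$ and $\eta$ have been interchanged: the horizontal runs of $U$ have become the vertical jumps of $V$ and vice versa, so $V$ begins with the strictly positive value $\xi_1 + 1$ at rank $1$ rather than with a run of zeros.

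I would then apply the identical argument to $V$ to obtain $\tilde U = V^-$. Setting $C_i = B_{i-1} + 1$ for the jump positions of $V$ (so $C_1 = 1$ and $V(C_i) = A_i$), the same inversion shows $\tilde U(x) = C_i$ on the block $A_{i-1}+1 \le x \le A_i$. Differencing now yields $\tilde u(x) = 0$ off the block starts, while at the start of block $i$ one gets $\tilde u(A_{i-1}+1) = C_i - C_{i-1} = \eta_{i-1} + 1$ for $i \ge 2$, together with the boundary value $\tilde u(1) = C_1 - 0 = 1$ for $i = 1$. Reading these off in order reproduces \eqref{hatb}.

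The only genuine obstacle is the bookkeeping at the boundaries, specifically the phase shift between $u$, which opens with a (possibly empty) run of zeros, and $v$, which opens with a strictly positive value at rank $1$. It is exactly this shift that forces the eternal leading term $\tilde u(1) = 1$ and the relabeling $\eta_{i-1}$ rather than $\eta_i$ in the $i$-th block of $\tilde u$; keeping the base cases $A_0 = B_0 = 0$ and $C_1 = 1$ straight is what makes the two successive applications of the inverse line up correctly. Alternatively one could bypass the explicit indices by invoking the symmetric characterization \eqref{VxU} of the dual, but the direct staircase computation seems the most transparent route.
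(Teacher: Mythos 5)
Your proposal is correct and is essentially the paper's own argument spelled out in full: the paper's one-line identity $\{x: V(x)=y\}=\{x: U(y-1)<x\le U(y)\}$ is exactly your observation that $V$ equals $A_i$ on the block $B_{i-1}+1\le x\le B_i$, and the paper likewise obtains \eqref{hatb} by repeating the same inversion on $V$. Your explicit $A_i,B_i,C_i$ bookkeeping just makes the phase shift producing $\tilde u(1)=1$ visible, which the paper leaves implicit.
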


\begin{proof}
From  $V(x)=\min\{y:U(y)\ge x\}$, we get $V(0)=0$ and
\begin{align}
\{x: V(x)=y\}= \{x: U(y-1)<x\le U(y)\},\quad y\ge1,\label{vx}
\end{align}
which implies  \eqref{hata}. In a similar way, relation \eqref{hatb} follows from \eqref{hata}.
Observe also that  \eqref{vx} entails \eqref{VxU}.
\end{proof}

\vspace{0.4cm}\noindent
{\bf Proof of Theorem \ref{proM}.}  
Using \eqref{VxU}, we obtain consecutively
\begin{align*}
\{\hat U_{-b,-a}(x)\le y\}&=\{\hat U_{-a-1}\circ\cdots\circ \hat U_{-b}(x)\le y\}=\{V_{a}\circ\cdots\circ V_{b-1}(x)\le y\}\\
&=\{V_{a+1}\circ\cdots\circ V_{b-1}(x)\le U_a(y)\}=\ldots=\{x\le U_{b-1}\circ\cdots\circ U_a(y)\}=\{x\le U_{a,b}(y)\}.
\end{align*}
Observe that $\tilde U_t=V_t^-$, and by  Lemma \ref{L}, we have $\tilde U_t(x)=U_t(x-1)+1$, which yields
\begin{align*}
\tilde U_{a,b}(x)&=\tilde U_{b-1}\circ\cdots\circ \tilde U_{a}(x)=\tilde U_{b-1}\circ\cdots\circ \tilde U_{a+1}(U_a(x-1)+1)\\
&= \tilde U_{b-1}\circ\cdots\circ \tilde U_{a+2}( U_{a+1}\circ U_a(x-1)+1)= U_{a,b}(x-1)+1.
\end{align*}

\vspace{0.4cm}\noindent
{\bf Proof of  Proposition \ref{JJ}.}  
The random variables $u(1),u(2),\ldots$ are independent with a common  distribution $\{P_k\}_{k=0}^\infty$ if and only if relation   \eqref{hata1} holds with mutually independent $\xi_1,\eta_1,\xi_2,\eta_2,\ldots$, such that
\begin{align*}
 \rP( \xi_i=k)=P_{0}^k(1-P_{0}),\quad
 \rP(\eta_i=k)=\rP(u(1)=k+1|u(1)\ge1),\quad k\ge0,\quad i\ge1.
\end{align*}
By Lemma \ref{L}, this proves the first statement of the proposition. 

Turning to the second statement concerning the distribution of $v(x)$, denote $\hat q(x)=\rP(v(x)>0)$. The first statement implies that \eqref{dlf} holds with 
\begin{align*}
\hat q(x)&= \rP\Big(\bigcup_{n=0}^\infty \{1+n+\eta_{1}+\ldots+\eta_{n}=x\}\Big)=\sum_{n=0}^{x-1} \rP(\eta_{1}+\ldots+\eta_{n}=x-n-1),\quad x\ge1.
\end{align*}
This entails that $\hat q(1)=1$, and  recursion \eqref{p0x} for $x\ge2$, is obtained via conditioning on $\eta_1$:
\begin{align*}
\hat q(x)&=\sum_{n=1}^{x-1} \rP(\eta_{1}+\ldots+\eta_{n}=x-n-1)=\sum_{n=1}^{x-1}\sum_{k=1}^{x-n} \rP(\eta_{1}=k-1)\rP(\eta_{2}+\ldots+\eta_{n}=x-n-k)\\
&=(1-P_0)^{-1}\sum_{k=1}^{x-1} P_k\sum_{n=1}^{x-k}\rP(\eta_{2}+\ldots+\eta_{n}=x-k-n)=(1-P_0)^{-1}\sum_{k=1}^{x-1} P_k\hat q(x-k).
\end{align*}

\vspace{0.4cm}\noindent
{\bf Proof of  Theorem \ref{th}.}  
Suppose that the conditions of Proposition \ref{JJ} are valid. 

If, as stated by \eqref{lft}, $\hat q(x)=q$ for all  $x\ge2$, then Proposition \ref{JJ} implies  \eqref{lfg} it is easy to verify independence of 
$v(1),v(2),\ldots$. Thus we find that the dual reproduction is that of a GW process with an eternal particle. 

To prove the reverse statement assume that $v(1),v(2),\ldots$ are independent and $v(2),v(3),\ldots$ have a common distribution. Using \eqref{p0x}, we find that for some $q\in(0,1]$,
\[ (1-P_{0})q=p(x-1)+q \sum _{k=1}^{x-2}P_{k},\quad x\ge2.\]
Therefore, for all $n\ge1$, we obtain 
\[ p(n)=q \sum _{k=n}^\infty P_{k},\]
which leads to \eqref{lfg}, which in turn yields \eqref{lft}.

\vspace{0.4cm}\noindent{\bf Acknowledgements.} The authors thank Uwe R\"{o}sler for a discussion of an issue concerning duality. The research by Jonas Jagers was supported by the Royal Swedish Academy of Sciences through the Elis Sidenbladh foundation grant.

\end{document}